\newtheorem{theorem}{Theorem}[section]
\theoremstyle{plain}
\newtheorem{corollary}[theorem]{Corollary}
\newtheorem{example}[theorem]{Example}
\newtheorem{lemma}[theorem]{Lemma}
\newtheorem{problem}[theorem]{Problem}
\newtheorem{proposition}[theorem]{Proposition}
\numberwithin{equation}{section}
\newcommand{\il}[1]{\mathcal{C}_{\lambda}(#1)}
\newcommand{\ir}[1]{\mathcal{C}_{\rho}(#1)}
\newcommand{\rmlt}{\mathrm{Mlt}_{\rho}}
\newcommand{\lmlt}{\mathrm{Mlt}_{\lambda}}
\newcommand{\linn}{\mathrm{Inn}_{\lambda}}
\newcommand{\aut}{\mathrm{Aut}}
\begin{document}

\title{Incidence properties of cosets in loops}

\author{Michael Kinyon}
\email[Kinyon]{mkinyon@math.du.edu}

\author{Kyle Pula}
\email[Pula]{jpula@du.edu}

\author{Petr Vojt\v{e}chovsk\'y}
\email[Vojt\v{e}chovsk\'y]{petr@math.du.edu}

\address{Department of Mathematics, University of Denver, 2360 S. Gaylord St, Denver, Colorado 80208, U.S.A.}

\begin{abstract}
We study incidence properties among cosets of finite loops, with emphasis on well-structured varieties such as antiautomorphic loops and Bol loops. While cosets in groups are either disjoint or identical, we find that the incidence structure in general loops can be much richer. Every symmetric design, for example, can be realized as a canonical collection of cosets of a finite loop. We show that in the variety of antiautomorphic loops the poset formed by set inclusion among intersections of left cosets is isomorphic to that formed by right cosets. We present an algorithm that, given a finite Bol loop $S$, can in some cases determine whether $|S|$ divides $|Q|$ for all finite Bol loops $Q$ with $S \le Q$, and even whether there is a selection of left cosets of $S$ that partitions $Q$. This method results in a positive confirmation of Lagrange's Theorem for Bol loops for a few new cases of subloops. Finally, we show that in a left automorphic Moufang loop $Q$ (in particular, in a commutative Moufang loop $Q$), two left cosets of $S\le Q$ are either disjoint or they intersect in a set whose cardinality equals that of some subloop of $S$.
\end{abstract}

\keywords{Cosets in loops, incidence properties of cosets, coset partition, combinatorial design, Lagrange's Theorem, Bol loop, Moufang loop}

\subjclass[2010]{20N05, 05B05}

\thanks{All three authors supported by the 2008 PROF grant of the University of Denver.}

\maketitle

\section{Introduction}\label{sec:intro}

This paper is intended for both design theorists and loop theorists. In order to make it as self-contained as possible, we therefore present basic definitions and results from both fields. The interested reader can find this necessary background material and much more in \cite{BeJuHa}, \cite{Br}, \cite{HuPi} and \cite{Pf}.

A \emph{quasigroup} is a groupoid $(Q,\cdot)$ such that for every $a$, $b\in Q$ the equations $ax=b$, $ya=b$ have unique solutions $x$, $y\in Q$, respectively. A \emph{loop} is a quasigroup $(Q,\cdot)$ with \emph{neutral element} $1\in Q$ satisfying $1x = x1 = x$ for every $x\in Q$. A nonempty subset $S\subseteq (Q,\cdot)$ is a \emph{subloop} of $Q$, which we denote by $S\le Q$, if $(S,\cdot)$ is a loop in its own right.

For a loop $Q$, subloop $S\le Q$, and $x\in Q$, the \emph{left} (resp. \emph{right}) \emph{coset} of $S$ with representative $x$ is the set $xS = \{xs:\;s\in S\}$ (resp. $Sx = \{sx:\;s\in S\}$). Cosets play a central role in proofs of some of the most basic results in the theory of groups, such as Lagrange's Theorem that $|S|$ divides $|Q|$, which is obtained by showing that the left (and right) cosets of $S$ form a partition of $Q$.

In contrast to such elegant yet boring incidence properties of cosets in the associative case, the incidence properties of cosets in nonassociative loops are very rich but very poorly understood. In this paper, we take up the study of coset incidence in nonassociative loops, emphasizing several of the well-structured varieties such as antiautomorphic loops and Bol loops. Our results are rather incomplete, and the paper should be viewed as a point of departure for a more systematic study.

Our motivation is twofold. First, we would like to find an elementary proof of Lagrange's Theorem for Moufang loops. Recall that \emph{Moufang loops} are defined by any one of the four equivalent identities
\begin{equation}\label{Eq:Moufang}
    ((xy)x)z{=}x(y(xz)),\,((xy)z)y{=}x(y(zy)),\,(xy)(zx){=}x((yz)x),\,(xy)(zx){=}(x(yz))x,
    \end{equation}
and are probably the most studied variety of nonassociative loops. Two groups of authors \cite{GaHa}, \cite{GrZa} independently proved Lagrange's Theorem for Moufang loops. However, their proofs rely on Liebeck's classification of finite simple Moufang loops \cite{Li}, which in turn depends on the classification of finite simple groups!

Secondly, we are intrigued by the possibility of realizing interesting combinatorial designs as cosets in algebraically structured loops. Let us illustrate this idea by two examples:

Consider the loop $(Q,\cdot)$ with the following multiplication table
\begin{displaymath}
\begin{small}
\begin{array}{c|cccccccccc}
Q&0&1&2&3&4&5&6&7&8&9\\
 \hline
0&0&1&2&3&4&5&6&7&8&9\\
1&1&2&0&4&5&6&7&8&9&3\\
2&2&0&1&6&7&8&9&3&4&5\\
3&3&4&6&2&0&7&5&9&1&8\\
4&4&5&7&0&3&9&8&1&6&2\\
5&5&6&8&7&9&3&1&4&2&0\\
6&6&7&9&5&8&1&3&2&0&4\\
7&7&8&3&9&1&4&2&0&5&6\\
8&8&9&4&1&6&2&0&5&3&7\\
9&9&3&5&8&2&0&4&6&7&1
\end{array}
\end{small}
\end{displaymath}
and subloop $S = \{0,1,2\} \le Q$. It is easy to check that both $\{xS:\;x\in Q\setminus S\}$ and $\{Sx:\;x\in Q\setminus S\}$ are isomorphic as designs to the projective plane of order $2$. In \S \ref{sec:designs}, we observe that in fact every symmetric design can be realized in an analogous way. A particularly interesting aspect of this example, however, is that $(Q, \cdot)$ possesses some algebraic structure. It happens to be a commutative weak inverse property loop, a representative of one of four isomorphism classes with these properties that realize the projective plane of order $2$ in this way.

As a second illustration, consider the smallest nonassociative Moufang loop \cite{Chein}, the loop $(M,\cdot)$ with multiplication table
\begin{displaymath}
    \begin{small}
    \begin{array}{c|cccccccccccc}
        M&1&2&3&4&5&6&7&8&9&a&b&c\\
        \hline
        1&1&2&3&4&5&6&7&8&9&a&b&c\\
        2&2&1&4&3&6&5&8&7&c&b&a&9\\
        3&3&6&5&2&1&4&9&a&b&c&7&8\\
        4&4&5&6&1&2&3&a&9&8&7&c&b\\
        5&5&4&1&6&3&2&b&c&7&8&9&a\\
        6&6&3&2&5&4&1&c&b&a&9&8&7\\
        7&7&8&b&a&9&c&1&2&5&4&3&6\\
        8&8&7&c&9&a&b&2&1&4&5&6&3\\
        9&9&c&7&8&b&a&3&4&1&6&5&2\\
        a&a&b&8&7&c&9&4&3&6&1&2&5\\
        b&b&a&9&c&7&8&5&6&3&2&1&4\\
        c&c&9&a&b&8&7&6&5&2&3&4&1
    \end{array}
    \end{small}
\end{displaymath}
Note that $(2\cdot 3)\cdot 7 \ne 2 \cdot (3\cdot 7)$, and that $S=\{1,2,7,8\} \le M$. Precisely four of the left cosets of $S$ are necessarily equal to $S$, while the remaining $8$ left cosets are as follows:
\begin{displaymath}
    \begin{array}{llll}
    B_3 = \{3,6,9,a\},&B_4 = \{4,5,9,a\},&B_5 = \{4,5,b,c\},&B_6 = \{3,6,b,c\}\\
    B_9 = \{3,4,9,c\},&B_{a} = \{3,4,a,b\},&B_{b} = \{5,6,a,b\},&B_{c} = \{5,6,9,c\}.
    \end{array}
\end{displaymath}
Let $\mathcal P = \{3,4,5,6,9,a,b,c\}$ and $\mathcal B =\{B_3,B_4,B_5,B_6,B_9,B_{a},B_{b},B_{c}\}$. Let $\mathcal D = (\mathcal P,\mathcal B)$ be the corresponding incidence structure. It is easy to see that $\mathcal D$ is a $1$-$(8,4,4)$ design, so every point is contained in precisely $4$ blocks. More importantly, $\mathcal D$ is close to being a $2$-design. Indeed, any two points of $\mathcal P$ are contained in precisely two blocks, except for the pairs of points $\{3,5\}$, $\{4,6\}$, $\{9,b\}$ and $\{a,c\}$, none of which is contained in any block.

Of course, there is no $2$-$(8,4,2)$ design, so our effort was doomed from the start, but it strikes us as a rather elegant way of obtaining a near $2$-design.

\section{Preliminaries}

Throughout this paper, let $Q$ be a finite loop of order $n$ and $S$ a subloop of $Q$ of order $m$. We study the incidence properties of the sets
\begin{displaymath}
    \il{Q,S} = \left\{\bigcap_{x\in X}xS\,:\,   \emptyset\ne X\subseteq Q\right\},\quad
    \ir{Q,S} = \left\{\bigcap_{x\in X}Sx\,:\,   \emptyset\ne X\subseteq Q\right\},
\end{displaymath}
partially ordered by inclusion. We are particularly interested in the maximal elements of $\il{Q,S}$, say, namely the left cosets $xS$, $x\in Q$.

Let us first address the question of which combinations of $n$, $m$ are possible. The answer follows easily from the following stronger result \cite[Theorem 2]{Ryser}:

\begin{theorem}[Ryser] \label{Th:Ryser} Let $R$ be an $r\times s$ array containing symbols from the set $\{1,\dots,n\}$. Suppose that every symbol $1\le i\le n$ occurs at most once in every column of $R$ and at most once in every row of $R$. For $1\le i\le n$, let $\ell(i)$ be the number of occurrences of $i$ in $R$. Then $R$ can be embedded into a latin square of order $n$ if and only if
\begin{equation}\label{Eq:Ryser}
    \ell(i)\ge r+s-n
\end{equation}
holds for every $1\le i\le n$.
\end{theorem}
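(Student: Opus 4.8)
The plan is to prove the two implications separately, handling necessity by a short counting argument and devoting the bulk of the work to sufficiency through a two-stage extension: first widen $R$ to an $r\times n$ latin rectangle, then deepen that rectangle to a latin square of order $n$.

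For necessity, suppose $R$ is embedded into a latin square $L$ of order $n$, and assume without loss of generality (permuting rows and columns of $L$ preserves the latin property) that $R$ occupies the top-left $r\times s$ corner. Fix a symbol $i$. Since $L$ is latin, $i$ occurs exactly once in each of the first $r$ rows, hence exactly $r$ times within those rows. Of these occurrences, exactly $\ell(i)$ lie inside $R$ (the first $s$ columns) and the remaining $r-\ell(i)$ lie in the last $n-s$ columns. As $i$ appears at most once per column, these $r-\ell(i)$ occurrences lie in distinct columns, so $r-\ell(i)\le n-s$, which rearranges to \eqref{Eq:Ryser}.

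For sufficiency, the first stage is to extend $R$ to an $r\times n$ latin rectangle by adding columns one at a time, maintaining the invariant that upon reaching width $s'$ every symbol $i$ satisfies $\ell'(i)\ge r+s'-n$ for its current count $\ell'(i)$. Note that at $s'=n$ this forces $\ell'(i)\ge r$, and since the total count $rn$ is shared among $n$ symbols, every $\ell'(i)$ must equal $r$, yielding a genuine latin rectangle. To pass from width $s'$ to $s'+1$ I would model the new column as a bipartite graph $H$ between the $r$ rows and the $n$ symbols, joining row $j$ to symbol $i$ precisely when $i$ is absent from row $j$; a legal new column is then a matching of $H$ saturating all rows. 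The subtlety is that a symbol currently at the minimum value $r+s'-n$ is missing from exactly $n-s'$ rows, and so \emph{must} appear in the new column if the invariant is to survive. I would therefore seek a matching that saturates every row and simultaneously covers all such ``tight'' symbols, verifying the relevant Hall condition using the fact that each row contains only $s'$ symbols (which bounds the number of symbols common to any family of rows) together with the tightness count.

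The second stage is the classical completion of an $r\times n$ latin rectangle to an $n\times n$ latin square, carried out by appending rows one at a time. Here each of the $n$ columns is missing exactly $n-r$ symbols and each symbol is missing from exactly $n-r$ columns, so the bipartite graph between columns and their missing symbols is $(n-r)$-regular; a regular bipartite graph has a perfect matching, and such a matching is exactly a legal new row. Iterating produces the full square. I expect the genuine obstacle to be the first stage, and specifically the verification that one matching can both saturate every row and cover every tight symbol while keeping the invariant intact: this is the only point at which the hypothesis \eqref{Eq:Ryser} is actually used, and making the Hall-type inequality hold in the regime $n-s'\le r$, where tight symbols genuinely occur, is where the counting must be done with care. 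The remaining ingredients, the necessity count and the row completion via regular bipartite matchings, are routine.
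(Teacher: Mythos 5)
The paper does not prove this statement at all: Theorem \ref{Th:Ryser} is quoted verbatim from Ryser's 1951 paper with a citation and is used as a black box to derive Corollary \ref{Cr:PossibleOrders}. So there is no in-paper proof to compare against; I can only assess your proposal on its own terms. Your overall architecture is the classical one for this theorem: the necessity count is complete and correct, and the second stage (completing an $r\times n$ latin rectangle to a latin square via perfect matchings in an $(n-r)$-regular bipartite graph) is exactly the content of M. Hall's theorem, which the paper separately records as Lemma \ref{Lm:HallCompletion}.

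The one place where you have located the difficulty but not discharged it is the crux of stage one: the existence of a single matching in $H$ that saturates every row \emph{and} covers every tight symbol. Plain Hall for the row side does not by itself give coverage of the tight symbols, so ``verifying the relevant Hall condition'' needs an actual mechanism. The cleanest one: in $H$ every row has degree exactly $n-s'$ (it contains $s'$ distinct symbols), and symbol $i$ has degree $r-\ell'(i)\le n-s'$ by the invariant, with equality precisely for the tight symbols. By K\"onig's edge-colouring theorem $H$ is properly $(n-s')$-edge-colourable, and any colour class is then a matching meeting every vertex of maximum degree, i.e.\ every row and every tight symbol. (Alternatively: Hall's condition holds separately for the set of rows and for the set of tight symbols by the degree count $|N(T)|\ge |T|(n-s')/(n-s')=|T|$, and two matchings saturating the two sides can be merged into one via alternating paths.) The edge-colouring viewpoint also streamlines your argument considerably: the $n-s$ colour classes of the graph built at width $s'=s$ already furnish all $n-s$ new columns simultaneously, with symbol $i$ appearing exactly $r-\ell(i)$ additional times, so the column-by-column invariant and the tight-symbol bookkeeping can be dispensed with entirely. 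With that step supplied, your proof is correct.
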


We now easily derive the desired restriction on $n$ and $m$ (see \cite[Theorem 1.5.1]{DeKe} or \cite[Theorem 2]{Evans}):

\begin{corollary}\label{Cr:PossibleOrders}
Let $1\le m\le n$. Then there is a loop $Q$ of order $n$ with a subloop $S$ of order $m$ if and only if either $m=n$ or $m\le \lfloor n/2\rfloor$.
\end{corollary}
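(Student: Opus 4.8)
The plan is to prove the two implications separately, using an elementary latin-rectangle count for the ``only if'' direction and Ryser's theorem (Theorem \ref{Th:Ryser}) for the ``if'' direction. Throughout I identify a loop of order $n$ with its Cayley table, viewed as an $n\times n$ latin square on the symbol set $\{1,\dots,n\}$, and I arrange for $S=\{1,\dots,m\}$ to occupy the top-left corner, with the neutral element of $S$ being the symbol $1$.

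For necessity, suppose $S\le Q$ with $1\le m<n$; I want to deduce $m\le\lfloor n/2\rfloor$. Fix $s\in S$ and examine row $s$ of the table. Since $sS=S$, the $m$ symbols of $S$ occupy exactly the $m$ columns indexed by $S$ in that row, so the $n-m$ entries of row $s$ lying in columns indexed by $Q\setminus S$ form a permutation of the $(n-m)$-element set $Q\setminus S$. Collecting these over all $s\in S$ produces an $m\times(n-m)$ array whose rows are permutations of $Q\setminus S$ and whose columns, being columns of a latin square, have distinct entries. Thus each column holds $m$ distinct symbols drawn from the $n-m$ symbols of $Q\setminus S$, forcing $m\le n-m$, that is, $m\le\lfloor n/2\rfloor$.

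For sufficiency the case $m=n$ is trivial, so assume $m\le\lfloor n/2\rfloor$, equivalently $2m\le n$. Start with any loop structure on $S$ (for instance a cyclic group) and let $R$ be its $m\times m$ Cayley table, regarded as an array with symbols in $\{1,\dots,m\}\subseteq\{1,\dots,n\}$. Each symbol $i\le m$ occurs $\ell(i)=m$ times in $R$ while each symbol $i>m$ occurs $\ell(i)=0$ times, so with $r=s=m$ the Ryser inequality $\ell(i)\ge r+s-n=2m-n$ holds in both cases (it reads $m\ge 2m-n$ for the former, always true, and $0\ge 2m-n$ for the latter, true precisely because $2m\le n$). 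Hence Theorem \ref{Th:Ryser} embeds $R$ into a latin square $L$ of order $n$ whose top-left block is $R$, and the quasigroup defined by $L$ has $S$ closed under multiplication, hence a subloop. It remains to make the ambient quasigroup a loop: in $L$ the element $1$ need not be a two-sided neutral element outside the block. I repair this without disturbing $R$ by first relabeling only the symbols in $\{m+1,\dots,n\}$ so that row $1$ becomes $(1,2,\dots,n)$ (the block uses no symbol above $m$, and row $1$ already reads $(1,\dots,m)$ on the first $m$ columns), and then permuting only the rows indexed by $\{m+1,\dots,n\}$ so that column $1$ becomes $(1,2,\dots,n)^{\top}$ (those entries are exactly a permutation of $\{m+1,\dots,n\}$, and the step touches neither row $1$ nor the block). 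The result is a latin square with $1$ a two-sided neutral element and $R$ in its corner, hence the Cayley table of a loop $Q$ of order $n$ with subloop $S$ of order $m$.

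I expect the main obstacle to be precisely this final normalization: Ryser's theorem delivers a latin square but guarantees nothing about a neutral element, so one must check that the border can be normalized using operations (symbol relabeling on $\{m+1,\dots,n\}$ and row permutation on $\{m+1,\dots,n\}$) that leave both the $S\times S$ block and the already-normalized portions intact. Everything else reduces to a single substitution into the Ryser inequality together with the one-line latin-rectangle count above.
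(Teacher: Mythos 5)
Your proof is correct, and the sufficiency direction is essentially the paper's: the same substitution of $\ell(i)=m$ (for $i\le m$) and $\ell(i)=0$ (for $i>m$) into Ryser's inequality, followed by a normalization of the resulting latin square. Two points of difference are worth noting. First, for necessity the paper simply reuses the ``only if'' half of Theorem \ref{Th:Ryser}: if $S\le Q$ then the Cayley table of $S$ already sits inside the order-$n$ latin square given by the Cayley table of $Q$, so Ryser forces $0\ge 2m-n$. Your elementary count on the $m\times(n-m)$ block to the right of $S$ (rows are permutations of $Q\setminus S$ by $sS=S$, columns have distinct entries, hence $m\le n-m$) reaches the same conclusion without invoking Ryser at all; this is self-contained and arguably more transparent, at the cost of not exploiting the ``if and only if'' form of the theorem you already have on hand. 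Second, you spell out the normalization step that the paper dismisses with ``upon permuting the rows and columns of $L$ as needed'': your observation that one can relabel only the symbols in $\{m+1,\dots,n\}$ to fix row $1$ and then permute only the rows indexed by $\{m+1,\dots,n\}$ to fix column $1$, neither operation disturbing the $S\times S$ block, is exactly the check the paper leaves implicit, and it is the right one (closure of the finite set $S$ under multiplication then gives a subloop). So the proposal is sound; it is the paper's argument with one direction replaced by a more elementary count and one glossed-over step made explicit.
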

\begin{proof}
The case $m=n$ is obvious, so assume that $m<n$. If $S$ is a subloop of order $m$ in a loop of order $n$ then any multiplication table $R$ of $S$ is a latin square of order $m$, without loss of generality containing the symbols $\{1,\dots,m\}\subseteq\{1,\dots,n\}$. Given $1\le i\le n$ and letting $\ell(i)$ be as in Theorem \ref{Th:Ryser}, we have
\begin{displaymath}
    \ell(i) = \left\{\begin{array}{ll}
        m,&\text{ if }1\le i\le m,\\
        0,&\text{ if }m<i\le n.
    \end{array}\right.
\end{displaymath}
We have $\ell(i)=0$ for some $i$ (since $m<n$), so \eqref{Eq:Ryser} holds for every $1\le i\le n$ if and only if $0\ge m+m-n$, i.e., $m\le \lfloor n/2\rfloor$. By Theorem \ref{Th:Ryser}, $R$ embeds into a latin square $L$ of order $n$ if and only if $m\le \lfloor n/2\rfloor$. Upon permuting the rows and columns of $L$ as needed, we can consider $L$ to be a multiplication table of a loop $Q$ of order $n$.
\end{proof}

\section{The left-right symmetry}\label{Sc:Symmetries}

In the terminology of partially ordered sets, both $\il{Q,S}$ and $\ir{Q,S}$ are \emph{meet-semilat\-tices} (for every $a$, $b$ there exists a largest lower bound $a\wedge b$) in which the maximal elements are \emph{meet-dense} (every element can be expressed as a finite meet of maximal elements). Note that we do not require that meet-semilattices have a largest element.

A bijection $f:A\to B$ between two meet-semilattices is an \emph{isomorphism} if $f(a\wedge b) = f(a)\wedge f(b)$ for every $a$, $b\in A$. Such an isomorphism clearly maps maximal elements of $A$ to maximal elements of $B$.

The following example shows that $\il{Q,H}$ and $\ir{Q,H}$ need not be isomorphic:

\begin{example}
Consider the loop $Q$ with multiplication table
\begin{displaymath}
\begin{array}{c|ccccccc}
    &1&2&3&4&5&6\\
    \hline
    1&1&2&3&4&5&6\\
    2&2&1&4&3&6&5\\
    3&3&4&5&6&1&2\\
    4&4&5&6&1&2&3\\
    5&5&6&1&2&3&4\\
    6&6&3&2&5&4&1
\end{array}
\end{displaymath}
and subloop $H=\{1,2\}$. Then there are five left cosets $\{1,2\}$, $\{3,4\}$, $\{3,6\}$, $\{4,5\}$, $\{5,6\}$ but only three right cosets $\{1,2\}$, $\{3,4\}$, $\{5,6\}$. This is a smallest loop in which the number of left cosets (with respect to a fixed subloop) does not coincide with the number of right cosets.
\end{example}

Moreover, in an arbitrary meet-semilattice, if the maximal elements are meet-dense, then the isomorphism is determined by its values on the maximal elements. Not every bijection of maximal elements can be extended into an isomorphism, of course, but the following result gives the necessary and sufficient condition:

\begin{lemma}\label{Lm:ExtendMax}
Let $P$, $P'$ be meet-semilattices in which maximal elements are meet-dense. Let $M=\{m_i\,:\,   i\in I\}$, $M'$ be the sets of all maximal elements of $P$, $P'$, respectively, and let $f:M\to M'$ be a bijection. Then $f$ extends into an isomorphism $P\to P'$ if and only if for every $\emptyset\ne J$, $K\subseteq I$ we have
\begin{equation}\label{Eq:Extend}
    \bigwedge_{j\in J}m_j = \bigwedge_{k\in K}m_k \Leftrightarrow \bigwedge_{j\in J}f(m_j) = \bigwedge_{k\in K}f(m_k).
\end{equation}
\end{lemma}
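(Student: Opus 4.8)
The plan is to exploit meet-density to represent every element of $P$ as a finite meet of maximal elements, and then to define the extension of $f$ on such representations, using \eqref{Eq:Extend} to guarantee that this is well defined. Throughout, I would regard a nonempty finite subset $J\subseteq I$ as a \emph{name} for the element $\bigwedge_{j\in J}m_j\in P$; by meet-density the assignment $J\mapsto\bigwedge_{j\in J}m_j$ is onto $P$, and \eqref{Eq:Extend} asserts precisely that two names denote the same element of $P$ exactly when the $f$-transported names denote the same element of $P'$. The standing finiteness of $J$ and $K$ is what guarantees the meets exist in a meet-semilattice, and I would keep it explicit.

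The necessity direction is immediate. If $g\colon P\to P'$ is an isomorphism extending $f$, then $g$ preserves finite meets and sends $m_j$ to $f(m_j)$, so $g\bigl(\bigwedge_{j\in J}m_j\bigr)=\bigwedge_{j\in J}f(m_j)$ for every nonempty finite $J$. Applying $g$ to an equality $\bigwedge_{j\in J}m_j=\bigwedge_{k\in K}m_k$ yields the right-hand side of \eqref{Eq:Extend}, and applying the bijection $g^{-1}$ yields the converse implication.

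For sufficiency, I would define $g\colon P\to P'$ by $g(p)=\bigwedge_{j\in J}f(m_j)$ whenever $p=\bigwedge_{j\in J}m_j$. The forward implication in \eqref{Eq:Extend} says exactly that this value is independent of the chosen representation of $p$, so $g$ is well defined; taking $J=\{i\}$ shows $g(m_i)=f(m_i)$, so $g$ extends $f$. Injectivity follows from the reverse implication: if $g(p)=g(q)$ for $p=\bigwedge_{j\in J}m_j$ and $q=\bigwedge_{k\in K}m_k$, then $\bigwedge_{j\in J}f(m_j)=\bigwedge_{k\in K}f(m_k)$, whence $p=q$. Surjectivity uses meet-density of $P'$ together with $f$ being a bijection onto $M'$: an arbitrary $p'\in P'$ is a finite meet of elements of $M'$, each of which is $f(m_i)$ for a unique $i$, so $p'$ lies in the image of $g$.

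It remains to check that $g$ preserves meets. The key elementary observation is that for $p=\bigwedge_{j\in J}m_j$ and $q=\bigwedge_{k\in K}m_k$ one has $p\wedge q=\bigwedge_{i\in J\cup K}m_i$, since an element is a lower bound of both $p$ and $q$ if and only if it is a lower bound of every $m_i$ with $i\in J\cup K$. Evaluating $g$ on this representation of $p\wedge q$ and splitting the meet over $J\cup K$ then gives $g(p\wedge q)=\bigwedge_{i\in J\cup K}f(m_i)=g(p)\wedge g(q)$. I expect the one point genuinely requiring care to be well-definedness: because the representation of an element as a finite meet of maximal elements is far from unique, the entire construction hinges on having \eqref{Eq:Extend} available in both directions, and every other property is then a short consequence.
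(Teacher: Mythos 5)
Your proposal is correct and follows essentially the same route as the paper's proof: define the extension on meet-representations, use the forward implication of \eqref{Eq:Extend} for well-definedness, the reverse implication for injectivity, meet-density of $P'$ for surjectivity, and the identity $\bigwedge_{j\in J}m_j\wedge\bigwedge_{k\in K}m_k=\bigwedge_{\ell\in J\cup K}m_\ell$ for preservation of meets. Your explicit insistence on finiteness of the index sets is a reasonable refinement (the paper quantifies over arbitrary nonempty $J,K\subseteq I$), but it does not change the argument.
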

\begin{proof}
Assume that \eqref{Eq:Extend} holds. Since the maximal elements of $P$ are meet-dense, every element of $P$ can be expressed as $\bigwedge_{j\in J}m_j$ for some $\emptyset\ne\ J\subseteq I$. If $f$ is to be a homomorphism, we must set $f(\bigwedge_{j\in J}m_j) = \bigwedge_{j\in J}f(m_j)$. By the direct implication of \eqref{Eq:Extend}, $f$ is well-defined. With $a=\bigwedge_{j\in J}m_j$, $b=\bigwedge_{k\in K}m_k$, we have $f(a\wedge b) = f(\bigwedge_{\ell\in J\cup K}m_\ell) = \bigwedge_{\ell\in J\cup K}f(m_\ell) = \bigwedge_{j\in J}f(m_j)\wedge\bigwedge_{k\in K}f(m_k) = f(a)\wedge f(b)$ because $P$, $P'$ are meet-semilattices. Thus $f$ is a homomorphism, and it is one-to-one thanks to the indirect implication of \eqref{Eq:Extend}. Given $a'\in P'$, we have $a'=\bigwedge_{j\in J}f(m_j)$ for some $\emptyset\ne J\subseteq I$ since the maximal elements in $P'$ are meet-dense, and thus $f(\bigwedge_{j\in J}m_j) = \bigwedge_{j\in J}f(m_j) = a'$, proving that $f$ is onto $P'$.

Conversely, if $f$ extends into an isomorphism, we must have $f(\bigwedge_{j\in J}m_j) = \bigwedge_{j\in J}f(m_j)$ for every $\emptyset\ne J\subseteq I$. If $\bigwedge_{j\in J}m_j = \bigwedge_{k\in K}m_k$ then $f(\bigwedge_{j\in J}m_j) = f(\bigwedge_{k\in K}m_k)$, and so $\bigwedge_{j\in J}f(m_j) = \bigwedge_{k\in K}f(m_k)$. The converse is also true, since $f$ is one-to-one.
\end{proof}

In some situations an isomorphism between $\il{Q,S}$ and $\ir{Q,S}$ can be deduced without constructing it explicitly. For example, if $Q$ is a group and $S\le Q$ then any two left (right) cosets of $S$ either coincide or are disjoint, hence $\il{Q,S}$ and $\ir{Q,S}$ are isomorphic. Similarly, if $Q$ is a loop and $S$ is a \emph{normal subloop} of $Q$ (that is, $xS=Sx$, $x(yS) = (xy)S$, $x(Sy) = (xS)y$ for every $x$, $y\in Q$) then again any two left (right) cosets of $S$ either coincide or are disjoint, so $\il{Q,S}$ and $\ir{Q,S}$ are isomorphic. Finally, note that in commutative loops the isomorphism holds trivially.

Let us nevertheless construct an explicit isomorphism $f:\il{Q,S}\to\ir{Q,S}$ when $Q$ is a group. The first candidate $f(xS) = Sx$ fails to do the job because it is not necessarily well-defined; it is possible to have $xS=yS$ but $Sx\ne Sy$, a smallest counterexample being the symmetric group $Q=S_3=\{\sigma,\,\rho\,:\,   \sigma^2=\rho^3=(\sigma\rho)^2=1\}$ with subgroup $S=\langle \sigma\rho\rangle$. But the next idea $f(xS) = Sx^{-1}$ works for groups and can be generalized:

A loop $Q$ has the \emph{antiautomorphic inverse property} (\emph{AAIP}) if for every $x\in Q$ there is $x^{-1}\in Q$ such that $xx^{-1}=1=x^{-1}x$ and if $(xy)^{-1} = y^{-1}x^{-1}$ holds for every $x$, $y\in Q$.

\begin{proposition}\label{Pr:AAIP}
Let $Q$ be a loop with the antiautomorphic inverse property and let $S\le Q$. Then the mapping $f:xS\mapsto Sx^{-1}$ is well-defined, and extends uniquely into an isomorphism $\il{Q,S}\to\ir{Q,S}$.
\end{proposition}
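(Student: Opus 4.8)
The plan is to realize $f$ as the restriction of a single bijection of $Q$ acting on subsets, namely the inversion map $\iota\colon x\mapsto x^{-1}$. First I would record two preliminary facts. Since $Q$ has two-sided inverses, $\iota$ is an involutory bijection of $Q$: from $x^{-1}x=1=xx^{-1}$ and uniqueness of solutions in a quasigroup one gets $(x^{-1})^{-1}=x$. Second, the subloop $S$ is closed under inversion, i.e.\ $S^{-1}=S$. Indeed, the neutral element of $S$ is idempotent, and $1$ is the only idempotent of a loop (the equation $xe=e$ has the unique solution $x=1$), so the neutral element of $S$ is $1$; then for $s\in S$ the equation $sy=1$ has a solution $y_0\in S$, and comparing with $ss^{-1}=1$ and using uniqueness in $Q$ gives $s^{-1}=y_0\in S$.

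With these in hand, the AAIP identity $(xs)^{-1}=s^{-1}x^{-1}$ together with $S^{-1}=S$ yields
\[
\iota(xS)=\{(xs)^{-1}:s\in S\}=\{s^{-1}x^{-1}:s\in S\}=S^{-1}x^{-1}=Sx^{-1},
\]
and symmetrically $\iota(Sz)=z^{-1}S$. Thus $\iota$ carries each left coset $xS$ onto the right coset $Sx^{-1}$. This already shows that $f\colon xS\mapsto Sx^{-1}$ is well-defined and is a bijection from the left cosets onto the right cosets: well-definedness is automatic because $\iota$ is a genuine function on subsets, while injectivity and surjectivity follow because $\iota$ is an involution interchanging left and right cosets (given a right coset $Sz$, we have $Sz=\iota(z^{-1}S)=f(z^{-1}S)$).

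The key step is to promote this to an isomorphism of the full meet-semilattices. Because $\iota$ is injective, it commutes with intersections of nonempty families: for subsets $A_i\subseteq Q$ one has $\iota\bigl(\bigcap_i A_i\bigr)=\bigcap_i\iota(A_i)$, the nontrivial inclusion using that a point of the right-hand side has a unique $\iota$-preimage, which then lies in every $A_i$. Applying this to a family of left cosets gives
\[
\iota\Bigl(\bigcap_{x\in X}xS\Bigr)=\bigcap_{x\in X}\iota(xS)=\bigcap_{x\in X}Sx^{-1},
\]
so $\iota$ maps $\il{Q,S}$ bijectively onto $\ir{Q,S}$. As a bijection induced by a bijection of $Q$, the map $\iota$ preserves inclusion in both directions, hence is an order isomorphism of the two posets, and an order isomorphism of meet-semilattices preserves meets; therefore its restriction is the desired isomorphism $\il{Q,S}\to\ir{Q,S}$ extending $f$.

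Finally, uniqueness is free: as noted just before Lemma~\ref{Lm:ExtendMax}, when the maximal elements are meet-dense an isomorphism is determined by its values on them, and $f$ prescribes exactly those values. (One could instead avoid exhibiting $\iota$ and simply verify the biconditional~\eqref{Eq:Extend} of Lemma~\ref{Lm:ExtendMax}, but that verification reduces to the same observation that inversion commutes with intersections, so the direct route seems cleanest.) The only genuinely delicate points are the two preliminary facts---that $S^{-1}=S$ and that inversion is an involution---since without them $\iota$ would fail to map $\il{Q,S}$ into $\ir{Q,S}$; everything after that is formal.
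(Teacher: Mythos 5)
Your proposal is correct and takes essentially the same approach as the paper: both realize $f$ as the restriction of the inversion antiautomorphism $x\mapsto x^{-1}$, which sends $xS$ to $Sx^{-1}$ and commutes with intersections, hence induces the desired isomorphism. You merely make explicit the facts the paper leaves implicit ($S^{-1}=S$ and that inversion is an involution) and argue the extension directly rather than by invoking condition \eqref{Eq:Extend} of Lemma \ref{Lm:ExtendMax}.
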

\begin{proof}
Consider the antiautomorphism $f:Q\to Q$, $x\mapsto x^{-1}$. For any subset $X$ of $Q$, let $f(X) = \{f(x)\,:\,   x\in X\}$. In particular, $f(xS) = (xS)^{-1} = S^{-1}x^{-1} = Sx^{-1}$. Note that $f$ is a homomorphism $\il{Q,S}\to\ir{Q,S}$, as $f(xS\cap yS) = (xS\cap yS)^{-1} = Sx^{-1}\cap Sy^{-1} = f(xS)\cap f(yS)$. The direct implication in \eqref{Eq:Extend} is therefore satisfied. The indirect implication holds as well, since $f^{-1}=f$.
\end{proof}

The variety of loops with the AAIP contains many well-studied varieties of loops. For instance, \emph{diassociative loops} (any two elements generate a group), \emph{inverse property loops} (satisfying $x^{-1}(xy) = y$ and $(xy)y^{-1}=x$), the already-mentioned Moufang loops, and \emph{automorphic loops} (inner mappings are automorphisms; see \cite{BrPa} and \cite{JeKiVo}).

Indeed, inverse property loops have the AAIP since $(xy)^{-1}x = (xy)^{-1}((xy)y^{-1}) = y^{-1}$, so $(xy)^{-1} = ((xy)^{-1}x)x^{-1} = y^{-1}x^{-1}$. Moufang loops have the AAIP because they are inverse property loops. In fact, Moufang loops are diassociative, by the famous Moufang's Theorem. (See \cite{Moufang} for the original proof of Moufang's Theorem and \cite{Drapal} for a much shorter proof.) Automorphic loops have the AAIP by \cite[Theorem 7.5]{JoKiNaVo}.

\begin{problem}
Is there a variety (or class) $\mathcal V$ of loops not contained in the varieties of antiautomorphic inverse property loops or commutative loops such that for every finite $Q\in\mathcal V$ and every $S\le Q$ the two meet-semilattices $\il{Q,S}$, $\ir{Q,S}$ are isomorphic?
\end{problem}

Given a loop $Q$, denote by $Q^{\mathrm{op}} = (Q,*)$ the loop with operation $x*y = yx$. Then, clearly, $\il{Q,S}\cong \ir{Q^{\mathrm{op}},S}$ because the two sets are in fact equal. We can therefore restrict our attention to $\il{Q,S}$ from now on.

\section{Symmetric designs and cosets}\label{sec:designs}

Figure \ref{Fg:Cayley} depicts a multiplication table of a loop $Q$ with a subloop $S$. The $m\times m$ latin square $L_1$ contains only elements of $S$ and is a multiplication table of $S$. The $(n-m)\times m$ \emph{latin rectangle} (that is, no symbol is repeated in any row or column) $L_2$ contains only symbols of $Q\setminus S$, each symbol of $Q\setminus S$ occurs in every column of $L_2$ precisely once, and each symbol of $Q\setminus S$ occurs in precisely $m$ rows of $L_2$. The $n\times (n-m)$ latin rectangle $L_3$ completes the multiplication table of $Q$.

\begin{figure}[ht]
\begin{center}
\setlength{\unitlength}{0.254mm}
\begin{picture}(161,151)(10,-165)
        \allinethickness{0.254mm}\path(20,-30)(155,-30)(155,-165)(20,-165)(20,-30) 
        \allinethickness{0.254mm}\path(20,-30)(75,-30)(75,-85)(20,-85)(20,-30) 
        \allinethickness{0.254mm}\path(75,-85)(75,-165) 
        \put(40,-64){\shortstack{$L_1$}} 
        \put(40,-130){\shortstack{$L_2$}} 
        \put(106,-104){\shortstack{$L_3$}} 
        \put(44,-22){\shortstack{$S$}} 
        \put(2,-64){\shortstack{$S$}} 
        \put(-27,-130){\shortstack{$Q\setminus S$}} 
        \put(95,-22){\shortstack{$Q\setminus S$}} 
\end{picture}
\end{center}
\caption{Nested multiplication tables of a loop $Q$ and its subloop $S$.}\label{Fg:Cayley}
\end{figure}

In this section we are concerned with design-like properties of the left cosets $\{xS\,:\,   x\in Q\}$, i.e., the maximal elements of $\il{Q,S}$. For convenience, we count identical cosets with the appropriate multiplicity for a total of $n$ left cosets. As $S$ is a subloop of $Q$, we have $xS=S$ for every $x\in S$ and $yS\cap S=\emptyset$ for every $y\in Q\setminus S$. Interesting incidence properties can therefore be found only among the $n-m$ cosets
\begin{displaymath}
    \mathcal B(Q,S) = \{xS\,:\,   x\in Q\setminus S\}.
\end{displaymath}
Note that the cosets of $\mathcal B(Q,S)$ correspond to the rows of $L_2$ in Figure \ref{Fg:Cayley}.

Let $\mathcal P$ be a set of points and $\mathcal B$ a collection of subsets of $\mathcal P$, called \emph{blocks}. Then $\mathcal D = (\mathcal P,\mathcal B)$ is a $t$-$(v,k,\lambda)$ \emph{(balanced incomplete block) design} if $|\mathcal P|=v$, $|B|=k$ for every $B\in\mathcal B$, and if every $t$-element subset of $\mathcal P$ is contained in precisely $\lambda>0$ blocks of $\mathcal B$. While it is sometimes assumed that $t\ge 2$ in the definition of a design, we allow $t=1$, too.

It can be easily shown by double counting that a $t$-design is also a $t'$-design for all $1\le t'\le t$. In particular, if $\mathcal D=(\mathcal P,\mathcal B)$ is a $t$-design, there is a constant $r$ such that every point of $\mathcal P$ is contained in precisely $r$ blocks of $\mathcal B$. The design $\mathcal D$ is called \emph{symmetric} if $b=|\mathcal B|$ is equal to $v=|\mathcal P|$. Equivalently, $\mathcal D$ is symmetric if $r=k$. (By elementary arguments, a symmetric design with $k<v-1$ must have $t\le 2$.)

As mentioned above, every element of $Q\setminus S$ is contained in precisely $m$ rows of $L_2$. Hence
\begin{displaymath}
    \mathcal D(Q,S) = (Q\setminus S,\ \mathcal B(Q,S)) = (Q\setminus S,\ \{xS\,:\,   x\in Q\setminus S\})
\end{displaymath}
is at least a $1$-$(n-m,m,m)$ design, possibly a $t$-design with $t>1$. Our immediate goal is to prove that all symmetric designs can be realized by cosets in loops:

\begin{theorem}\label{Th:Designs}
Let $Q$ be a loop and let $S$ be a subloop of $Q$. Then $\mathcal D(Q,S)$ is a symmetric $1$-design. Conversely, if $\mathcal D$ is a symmetric design, then there is a loop $Q$ and a subloop $S\le Q$ such that $\mathcal D = \mathcal D(Q,S)$.
\end{theorem}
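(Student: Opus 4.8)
The forward direction is essentially already established in the discussion preceding the theorem: for any $S\le Q$, the structure $\mathcal D(Q,S)=(Q\setminus S,\{xS:x\in Q\setminus S\})$ has $v=n-m$ points and, counting cosets with multiplicity, exactly $n-m$ blocks, each of size $m$, while each point of $Q\setminus S$ lies in precisely $m$ of the rows of $L_2$. Thus it is a $1$-$(n-m,m,m)$ design, and the equality $b=v=n-m$ makes it symmetric; I would simply restate this.

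The content is the converse. Let $\mathcal D=(\mathcal P,\mathcal B)$ be symmetric, so $|\mathcal P|=|\mathcal B|=v$, every block has a common size $k$, and (by symmetry) every point lies in exactly $r=k$ blocks. The plan is to realize $\mathcal D$ inside a table of the nested shape of Figure \ref{Fg:Cayley} with $m=k$ and $n=v+k$: take $S$ to be any loop of order $k$ with neutral element $1$ on a $k$-element set disjoint from $\mathcal P$, and set $Q=S\cup\mathcal P$ with $Q\setminus S=\mathcal P$. Since a left coset $xS=\{x\cdot s:s\in S\}$ only involves the columns indexed by $S$, the whole problem reduces to filling the $(n-m)\times m$ block $L_2$ as a latin rectangle on the symbols $\mathcal P$ whose rows, read as sets, are exactly the blocks of $\mathcal B$, and then completing the table to a loop.

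The key step is to distribute the elements of each block among the $m$ columns so that every column, as well as every row, becomes a permutation. For this I would use the $v\times v$ incidence matrix $A$ of $\mathcal D$ (rows blocks, columns points): symmetry makes all its row and column sums equal to $k$, so $A=P_1+\cdots+P_k$ is a sum of $k$ permutation matrices (equivalently, the $k$-regular bipartite incidence graph decomposes into $k$ perfect matchings, by K\"onig's theorem). Each $P_j$ is a bijection $\pi_j:\mathcal B\to\mathcal P$ with $\pi_j(B)\in B$, and since the $P_j$ sum to $A$ we get $\{\pi_1(B),\dots,\pi_k(B)\}=B$ for every block. I then use $\pi_1$ to identify each block $B$ with the point $\pi_1(B)\in B$, so that each $x\in\mathcal P$ names a block $B(x):=\pi_1^{-1}(x)\ni x$. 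Writing $S=\{1=\sigma_1,\dots,\sigma_k\}$ and declaring $x\cdot\sigma_j=\pi_j(B(x))$ for $x\in\mathcal P$, the row of $L_2$ indexed by $x$ is exactly $B(x)$, each column $\sigma_j$ is the permutation $\pi_j\circ\pi_1^{-1}$ of $\mathcal P$, and $x\cdot 1=x$. Together with the table $L_1$ of $S$ this produces an $n\times m$ latin rectangle $T$ whose nontrivial cosets are precisely $\mathcal B$.

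It remains to complete $T$ to a full multiplication table and verify it is a loop, and this is the step I expect to require the most care. Ryser's Theorem \ref{Th:Ryser} extends $T$ to a latin square of order $n$, since each symbol occurs exactly $m$ times in $T$ and so \eqref{Eq:Ryser} is satisfied; moreover $S\times S$ still reproduces $L_1$, so $S\le Q$, and the left cosets recover $\mathcal D$. The subtlety is that $T$ only forces $1$ to be a \emph{right} neutral element, so an arbitrary completion need not be a loop. To control this I would carry out the completion explicitly: the missing entries form the bipartite deficiency graph between rows and symbols, which is $v$-regular here, hence decomposes into $v$ perfect matchings, one per column of the block $L_3$. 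Since row $1$ of $T$ contains exactly the symbols of $S$, row $1$ is adjacent in this graph to precisely the $v$ symbols of $\mathcal P$, each used by a different matching; indexing the columns of $L_3$ by the symbol that its matching assigns to row $1$ then forces $1\cdot c=c$ for every $c\in\mathcal P$. With $1$ now a two-sided neutral element, $Q$ is a loop, $S\le Q$, and $\mathcal D=\mathcal D(Q,S)$, as required.
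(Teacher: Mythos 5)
Your proof is correct and follows essentially the same route as the paper: build $L_2$ from the blocks via $k$ disjoint systems of distinct representatives (your K\H{o}nig/permutation-matrix decomposition of the incidence matrix is equivalent to the paper's iterated use of Hall's marriage theorem in Lemma \ref{Lm:Marriage}), adjoin a multiplication table $L_1$ of $S$, and complete the resulting $n\times m$ latin rectangle to a latin square of order $n$ (the paper invokes Lemma \ref{Lm:HallCompletion}, a special case of the Ryser theorem you use). Your explicit normalization making $1$ a two-sided neutral element---indexing the rows of $L_2$ by the values of the first system of distinct representatives and permuting the columns of $L_3$ to fix row $1$---is precisely the detail the paper compresses into ``upon rearranging the rows and columns of $L$, if necessary.''
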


The key steps in the proof of Theorem \ref{Th:Designs} are furnished by Lemma \ref{Lm:Marriage} (a well-known result) and Lemma \ref{Lm:HallCompletion} (a special case of Theorem \ref{Th:Ryser}).

Given a family $\mathcal B$ of subsets of $\mathcal P$, we say that $g:\mathcal B\to \mathcal P$ is a \emph{system of distinct representatives} if $g(B)\in B$ for every $B\in\mathcal B$ and $g$ is one-to-one. Two systems of distinct representatives are said be \emph{disjoint} if they disagree on every $B \in \mathcal B$.

\begin{lemma}\label{Lm:Marriage}
Let $\mathcal B = \{B_1,\dots,B_n\}$ be a family of $k$-element subsets of $\mathcal P = \{1,\dots, n\}$. The following statements are equivalent:
\begin{enumerate}
\item[(i)] Each element of $\mathcal P$ lies in precisely $k$ blocks of $\mathcal B$ (i.e., $(\mathcal P,\mathcal B)$ is a $1$-$(n,k,k)$ design).
\item[(ii)] The family $\mathcal B$ has $k$ mutually disjoint systems of distinct representatives.
\item[(iii)] It is possible to form an $n\times k$ latin rectangle $L$ so that the symbols in the $i$th row of $L$ are the elements of the block $B_i$.
\end{enumerate}
\end{lemma}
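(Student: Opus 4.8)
The plan is to establish the cycle (i) $\Rightarrow$ (ii) $\Rightarrow$ (i) together with the equivalence (ii) $\Leftrightarrow$ (iii), invoking Hall's Marriage Theorem for the single substantive step and reserving elementary counting for the rest.

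I would first dispatch (ii) $\Leftrightarrow$ (iii), since it is purely a matter of bookkeeping. Given $k$ mutually disjoint systems of distinct representatives $g_1,\dots,g_k$, I would define an $n\times k$ array $L$ by $L_{ij}=g_j(B_i)$. The $i$th row then consists of $k$ elements of $B_i$ that are pairwise distinct (because the $g_j$ disagree on $B_i$) and hence, since $|B_i|=k$, exhaust $B_i$; the $j$th column consists of distinct symbols because $g_j$ is one-to-one. Thus $L$ is a latin rectangle of the required form. Conversely, the columns of any such rectangle furnish $k$ systems of distinct representatives, and these are mutually disjoint precisely because no symbol is repeated within a row.

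For (i) $\Rightarrow$ (ii) I would use Hall's Marriage Theorem and induct on $k$. To see that $\{B_1,\dots,B_n\}$ satisfies Hall's condition, I would double count the incidences between a subfamily $\{B_i:i\in J\}$ and its union $U=\bigcup_{i\in J}B_i$: the blocks contribute $|J|\cdot k$ incidences, while each point of $U$ lies in at most $k$ of the $B_i$ by (i), so $|J|\cdot k\le |U|\cdot k$ and therefore $|U|\ge |J|$. Hall's theorem then yields a system of distinct representatives $g_1$, which, being an injection between two $n$-element sets, is a bijection. I would next replace each $B_i$ by $B_i'=B_i\setminus\{g_1(B_i)\}$ and note that, because $g_1$ is a bijection, each point is deleted from exactly one block, so $(\mathcal P,\{B_i'\})$ is a $1$-$(n,k-1,k-1)$ design. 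Iterating this construction produces representatives $g_1,\dots,g_k$, each of which disagrees with all its predecessors on every block by construction, i.e., $k$ mutually disjoint systems of distinct representatives.

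Closing the cycle, for (ii) $\Rightarrow$ (i) I would count the appearances of a fixed point $p$. Each $g_j$ is a bijection $\mathcal B\to\mathcal P$, so $p=g_j(C_j)$ for a unique block $C_j\ni p$; disjointness forces $C_1,\dots,C_k$ to be distinct, so $p$ lies in at least $k$ blocks. As the total number of incidences is $\sum_i|B_i|=nk$ and there are $n$ points, each lying in at least $k$ blocks, every point lies in exactly $k$ blocks, which is (i). The only external ingredient is Hall's Marriage Theorem; the one point demanding care is the verification that the reduced family is again a $1$-$(n,k-1,k-1)$ design, which depends on $g_1$ being a genuine bijection rather than a mere injection, and everything else reduces to double counting.
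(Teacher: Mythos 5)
Your proposal is correct and follows essentially the same route as the paper: Hall's Marriage Theorem applied iteratively (with the same double-counting verification of Hall's condition) for (i)~$\Rightarrow$~(ii), an elementary counting argument for (ii)~$\Rightarrow$~(i), and the columns-of-a-latin-rectangle correspondence for (ii)~$\Leftrightarrow$~(iii). Your explicit remark that the peeled-off system of distinct representatives is a bijection (so that the reduced family is again a $1$-design) is a detail the paper leaves implicit, but the underlying argument is identical.
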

\begin{proof}
Suppose we are given (i). Any collection of $s$ blocks from $\mathcal B$ contains $ks$ points, counting multiplicities. Since no point appears more than $k$ times among these blocks, there are at least $ks/k = s$ distinct points among them. Thus by Hall's Marriage Theorem \cite[Theorem 1]{HallP}, we may select at least one system of distinct representatives. Think of this system as removing a single element from each block of $\mathcal B$ and thereby placing us in precisely the same situation we started with, except with $k$ reduced by $1$. Iterating this process, we construct a collection of $k$ systems of distinct representatives that are mutually disjoint by construction, yielding (ii). Conversely, given (ii), suppose some point $x$ occurs in $k+1$ blocks. As each occurrence of $x$ must be selected by precisely one of the $k$ systems of distinct representatives, one such system selects at least two occurrences of $x$, a contradiction. Since no point can occur more than $k$ times, each must occur precisely $k$ times, yielding (i).

To see the equivalence of (ii) and (iii), note that each system of distinct representatives of $\mathcal B$ gives rise to a (latin) column of $L$, and vice versa. In particular, the symbol $g(B)$ occurs at the intersection of the column indexed by the system of distinct representatives $g$ and the row indexed by block $B$.
\end{proof}

\begin{lemma}[Hall \cite{Hall}]\label{Lm:HallCompletion} Given $0\le k\le n$, any $n\times k$ latin rectangle containing symbols from $\{1,\dots,n\}$ can be extended to a latin square of order $n$.
\end{lemma}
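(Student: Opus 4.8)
The plan is to recognize this statement as essentially the inequality $\ell(i)\ge r+s-n$ of Ryser's theorem (Theorem~\ref{Th:Ryser}) applied to the latin rectangle itself, so that the completion comes for free. First I would fix the bookkeeping: the given array is an $n\times k$ latin rectangle $R$ with symbols from $\{1,\dots,n\}$, so in the notation of Theorem~\ref{Th:Ryser} we have $r=n$ rows and $s=k$ columns. I would dispose of the degenerate cases $k=0$ (no constraints) and $k=n$ ($R$ is already a latin square) and assume $0<k<n$.

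The key observation, and the only point requiring any care, is the computation of $\ell(i)$, the number of occurrences of a symbol $i$ in $R$. Each column of $R$ consists of $n$ entries, one in each row, drawn from the $n$ symbols $\{1,\dots,n\}$, and by the latin condition no symbol repeats within a column; hence each column is a full permutation of $\{1,\dots,n\}$ in which every symbol appears exactly once. Summing over the $k$ columns yields $\ell(i)=k$ for every $i$. With these values, Ryser's inequality~\eqref{Eq:Ryser} reads $\ell(i)=k\ge r+s-n=n+k-n=k$, which holds (with equality) for every $i$. Theorem~\ref{Th:Ryser} then guarantees that $R$ embeds into a latin square of order $n$, which is precisely the claimed extension.

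For completeness I would record an alternative self-contained route that avoids the full strength of Ryser and matches the attribution to Hall: extend $R$ one column at a time. Given an $n\times j$ latin rectangle with $j<n$, for each row $i$ let $A_i\subseteq\{1,\dots,n\}$ be the set of symbols not yet used in row $i$, so $|A_i|=n-j$; a legal new column is exactly a system of distinct representatives for the family $(A_1,\dots,A_n)$. Since each symbol already occurs in exactly $j$ rows, it lies in exactly $n-j$ of the sets $A_i$, so any $t$ of these sets have union of size at least $t(n-j)/(n-j)=t$, verifying Hall's condition. Hall's Marriage Theorem (as invoked in Lemma~\ref{Lm:Marriage}) then supplies the new column, and iterating until $j=n$ finishes.

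The main obstacle is essentially nonexistent: the substantive work has already been done in Theorem~\ref{Th:Ryser} (or, for the alternative, in Hall's theorem), and the content here reduces to the single counting step $\ell(i)=k$. The one thing to watch is the orientation of rows versus columns, since it is precisely the ``tall'' shape $n\times k$ that forces each column to be saturated; for a rectangle with fewer than $n$ rows a column would not exhaust all symbols and this count would fail.
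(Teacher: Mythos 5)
Your proposal is correct: the paper gives no proof of this lemma (it is cited to Hall and described in the text only as ``a special case of Theorem~\ref{Th:Ryser}''), and your main argument---taking $r=n$, $s=k$, observing that each of the $k$ columns is a full permutation of $\{1,\dots,n\}$ so that $\ell(i)=k$, and checking $k\ge n+k-n$---is exactly that specialization. Your alternative column-by-column extension via Hall's Marriage Theorem is also sound (and is essentially the argument of the cited source), but it is not needed given Theorem~\ref{Th:Ryser}.
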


\begin{proof}[Proof of Theorem \ref{Th:Designs}]
We have already shown that $\mathcal D(Q,S)$ is a $1$-$(n-m,m,m)$ design. Since its points form the set $Q\setminus S$ and it has $n-m$ blocks by definition, it is symmetric.

Conversely, suppose that $\mathcal D=(\mathcal P,\mathcal B)$ is a symmetric $t$-$(v,k,\lambda)$ design, that is, $b=v$. We construct $Q$ of order $n$ and $S\le Q$ of order $m$ in three steps; first the latin rectangle $L_2$, then $L_1$, and finally $L_3$, referring to Figure \ref{Fg:Cayley}.

Set $m=k$, $n-m=v=b$. Every element of $\mathcal P$ appears in precisely $r=k$ blocks of $\mathcal B$. By Lemma \ref{Lm:Marriage}, the blocks of $\mathcal B$ give rise to an $(n-m)\times m$ latin rectangle $L_2$ on $n-m$ symbols which we identify with the elements of $Q\setminus S$. We can arrange additional $m$ symbols into any (normalized) latin square $L_1$ and declare it a multiplication table of $S$. Altogether, $L_1\cup L_2$ form an $n\times m$ latin rectangle on $n$ symbols. By Lemma \ref{Lm:HallCompletion}, $L_1\cup L_2$ can be completed to a latin square $L$ of order $n$ with some $n\times (n-m)$ latin rectangle $L_3$. Upon rearranging the rows and columns of $L$, if necessary, we obtain a multiplication table of $Q$.
\end{proof}

An obvious question is whether Theorem \ref{Th:Designs} is of any utility in the ongoing search for symmetric $t$-designs. The answer is probably negative, but we would like to say the following:

When $Q$ is a group, the design $\mathcal D(Q,S)$ is highly but trivially structured (with repeated blocks). On the other hand, if $Q$ is a random loop, it is to be expected that $\mathcal D(Q,S)$ is going to be merely a $1$-design, not a $t$-design with $t>1$. It might therefore seem that interesting designs $\mathcal D(Q,S)$ could be constructed in varieties of loops that have nice algebraic properties but not quite as strong as groups. Using the \texttt{LOOPS} package \cite{LOOPS}, we have conducted a heuristic search in the varieties of Moufang and Bol loops, but we did not find any $t$-designs with $t>1$. The difficulty becomes apparent upon a closer inspection of the proof of Theorem \ref{Th:Designs}. While the design itself must be cooked up carefully in the latin rectangle $L_2$, the entire loop $Q$ can be obtained essentially randomly by adjoining the latin rectangles $L_1$ and $L_3$. It would be interesting to see if $L_1\cup L_3$ can be obtained in a systematic (that is, algebraic) fashion depending on $L_2$, hence resulting in interesting algebraic properties of the loop $Q$. To illustrate this idea, we forced certain algebraic properties and used the finite model builder \texttt{Mace4} to obtain the nice loop $Q$ of \S \ref{sec:intro} with $\mathcal D(Q,S)$ corresponding to  the projective plane of order $2$.

We conclude this section with a few questions concerning the cardinality of $\il{Q,S}$. If $m$ divides $n$, then $\il{Q,S}$ can be made as small as possible (containing only $n/m$ sets of size $m$ and, if $m<n$, the empty set) by choosing $Q$ to be the cyclic group $C_n$. How small can $\il{Q,S}$ be when $m$ does not divide $n$? How big can $\il{Q,S}$ be?

The set $\il{Q,S}$ can contain at most $n-m+1$ sets of order $m$ and this will happen precisely when the $n-m$ cosets $\{xS\,:\,   x\in Q\setminus S\}$ are distinct, that is, when $\mathcal D(Q,S)$ is a \emph{simple} (no repeated blocks) design. This can be easily achieved by placing the symbols $0$, $\dots$, $n-m-1$ into $n-m$ rows (forming $L_2$) so that the $i$th row reads
\begin{displaymath}
    (i \mod (n-m),\ (i+1) \mod (n-m),\ \dots,\ (i+m-1)\mod (n-m) ).
\end{displaymath}
Note that if $3m \le n$, then this construction also maximizes the number of singletons in $\il{Q,S}$ since the intersection of rows $i$ and $i+m-1$ is $\{i+m-1\}$.
However, we know neither how to maximize the number of $k$-element subsets of $\il{Q,S}$ for a general $k$, $1<k<m$, nor how to maximize the cardinality of $\il{Q,S}$. We therefore ask:

\begin{problem}\label{Pr:MaxSize}
Suppose that $1\le m\le n$ are integers such that $m\le \lfloor n/2\rfloor$, and let $\mathcal B$ be the blocks of a $1$-$(n-m,m,m)$ design. Let $\overline{\mathcal B}$ be the closure of $\mathcal B$ under intersections. How should $\mathcal B$ be chosen to maximize the cardinality of $\overline{\mathcal B}$, the number of $k$-element subsets of $\overline{\mathcal B}$?
\end{problem}

The restriction $m\le \lfloor n/2\rfloor$ in Problem \ref{Pr:MaxSize} is necessary in the context of loops due to Corollary \ref{Cr:PossibleOrders}. On the set-theoretical level, it makes sense to propose Problem \ref{Pr:MaxSize} for any $1\le m\le n$ and without the assumption that every $1\le i\le n-m$ appears in precisely $m$ blocks of $\mathcal B$.

While in this paper we have restricted our attention to cosets of subloops, one could also consider whether interesting designs arise as translates of arbitrary subsets of $Q$. For example, a simple computer search (aided by the \texttt{DESIGN} package \cite{DESIGN} for \texttt{GAP}) reveals that in the Moufang loops of order $16$ with indices 2, 3, and 5 in the \texttt{LOOPS} package \cite{LOOPS}, there are, respectively, 128, 896, and 256 subsets of order $6$ whose collection of left translates form 2-(16,6,2) designs. In this case, these designs are all representatives of a single isomorphism class, and this class can in fact be realized by difference sets in groups of order $16$.

\section{Coset decompositions and Lagrange-like properties}\label{Sc:Decompositions}

Following \cite{Pf}, we say that $Q$ has a \emph{left coset decomposition modulo $S$} if any two left cosets of $S$ in $Q$ are either disjoint or coincide. As a weaker condition, we say that $Q$ has a \emph{left coset partition modulo $S$} if there is a subset of left cosets of $S$ in $Q$ that partitions $Q$.

\begin{lemma}[Theorem I.2.12 of \cite{Pf}]\label{Lm:Pf}
Let $S$ be a subloop of $Q$. Then $Q$ has a left coset decomposition modulo $S$ if and only if $(xs)S=xS$ for every $x\in Q$, $s\in S$.
\end{lemma}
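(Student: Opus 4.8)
The plan is to prove both implications directly from the definition of a left coset decomposition, using only the fact that $S$, being a subloop, contains the neutral element $1$, so that $x = x\cdot 1 \in xS$ for every $x$.

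For the forward implication, I would assume that any two left cosets of $S$ are either disjoint or coincide, and fix $x\in Q$, $s\in S$. The key observation is that the element $xs$ lies simultaneously in $xS$ (by definition of the coset, since $s\in S$) and in $(xs)S$ (since $1\in S$ gives $(xs)\cdot 1 = xs$). Thus the cosets $xS$ and $(xs)S$ share the element $xs$ and so are not disjoint; by the decomposition hypothesis they must therefore coincide, which is exactly $(xs)S = xS$.

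For the converse, I would first isolate the substantive consequence of the hypothesis, namely that each coset equals the coset of any of its own elements: if $z\in xS$, then $z = xs$ for some $s\in S$, and the assumption $(xs)S = xS$ gives $zS = xS$. Granting this, suppose two cosets $xS$ and $yS$ are not disjoint and pick $z\in xS\cap yS$. Applying the boxed fact to each coset yields $xS = zS$ and $yS = zS$, whence $xS = yS$. This shows the two cosets coincide, establishing the decomposition property.

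I do not anticipate a genuine obstacle here; the argument is elementary bookkeeping with coset membership. The one point that carries all the weight is the implication ``$z\in xS \Rightarrow zS = xS$,'' which is precisely where the hypothesis $(xs)S = xS$ is invoked, and I would make sure to state it explicitly rather than leave it implicit, since it is the feature that distinguishes loops from groups (where it holds automatically).
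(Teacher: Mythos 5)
Your proof is correct and follows essentially the same route as the paper's: the forward direction uses $xs\in (xs)S\cap xS$, and the converse picks a common element $z=xs_1=ys_2$ of two intersecting cosets and applies the hypothesis to each representative. The only difference is presentational — you isolate the step ``$z\in xS\Rightarrow zS=xS$'' as an explicit intermediate claim, which the paper leaves implicit in the chain $yS=(ys_1)S=(zs_2)S=zS$.
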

\begin{proof}
Suppose that $Q$ has a left coset decomposition modulo $S$, and let $x\in Q$, $s\in S$. Since $xs\in (xs)S\cap xS$, we conclude that $(xs)S=xS$. Conversely, suppose that $(xs)S=xS$ for every $x\in Q$, $s\in S$. If $yS\cap zS\ne\emptyset$ then there are $s_1$, $s_2\in S$ such that $ys_1=zs_2$, so $yS = (ys_1)S = (zs_2)S = zS$.
\end{proof}

A loop has the \emph{right inverse property} if it satisfies the identities $yy^{-1}=y^{-1}y=1$ and $(xy)y^{-1}=x$. A loop is \emph{power-associative} if each element generates a group, and a power-associative loop is \emph{right power alternative} if $(xy^i)y^j = xy^{i+j}$ holds for all integers $i$, $j$. Note that a right power alternative loop has the right inverse property.

\begin{lemma}\label{Lm:RAlt}
Let $Q$ be a right power alternative loop and let $S\le Q$ be generated by one element (hence $S$ is a cyclic group). Then $Q$ has a left coset decomposition modulo $S$.
\end{lemma}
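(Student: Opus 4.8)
The plan is to reduce the statement to the criterion of Lemma \ref{Lm:Pf}, which says that $Q$ has a left coset decomposition modulo $S$ precisely when $(xs)S = xS$ for every $x \in Q$ and every $s \in S$. Since $S$ is generated by a single element $a$ and $Q$ is power-associative, $S = \langle a \rangle$ is a finite cyclic group, so every element of $S$ is a power of $a$. This is the feature of the hypothesis that I would exploit.

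Concretely, I would fix $x \in Q$ and write an arbitrary $s \in S$ as $s = a^i$ for some integer $i$. A typical element of the coset $(xs)S = (xa^i)S$ is then $(xa^i)a^j$, where $a^j$ ranges over $S$. The right power alternative law gives $(xa^i)a^j = xa^{i+j}$, which is an element of $xS$, so immediately $(xs)S \subseteq xS$. For the reverse inclusion (and in fact equality) I would note that as $a^j$ ranges over all of $S = \langle a \rangle$, the exponent $i+j$ runs through a complete set of residues modulo the order of $a$; since $a^{i+j}$ depends only on $i+j$ modulo that order, $\{xa^{i+j} : a^j \in S\} = \{xa^k : a^k \in S\} = xS$. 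Hence $(xs)S = xS$ for all $x$ and $s$, and Lemma \ref{Lm:Pf} delivers the conclusion.

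Since the argument is a one-line computation once Lemma \ref{Lm:Pf} is invoked, there is no genuine obstacle; the content lies entirely in choosing the right reduction. The only point that requires care is that the right power alternative identity $(xy^i)y^j = xy^{i+j}$ applies here precisely because \emph{every} element of $S$, together with all its powers, is a power of the single generator $a$. This is exactly where the assumption that $S$ is cyclic (rather than an arbitrary subloop) is essential: for a subloop needing two or more generators, there would be no single $y$ allowing us to slide an arbitrary $s \in S$ past $x$ in this fashion, and the collapse of $(xa^i)a^j$ to $xa^{i+j}$ would fail.
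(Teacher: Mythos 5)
Your proposal is correct and follows essentially the same route as the paper: reduce to the criterion of Lemma \ref{Lm:Pf}, write $S=\langle a\rangle$, and collapse $(xa^i)a^j$ to $xa^{i+j}$ via the right power alternative law. The only (cosmetic) difference is that the paper gets the reverse inclusion by exhibiting $xa^m=(xa^n)a^{m-n}\in(xa^n)S$ directly, whereas you argue that $i+j$ runs over all residues; both are immediate from the same identity.
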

\begin{proof}
By Lemma \ref{Lm:Pf}, it suffices to show that $(xs)S = xS$ for every $x\in Q$ and $s\in S$. Since $S$ is a cyclic group, we can assume that $S = \langle t\rangle$, and we must prove that $(xt^n)S = xS$ for every $n$. This equality follows from $(xt^n)t^m = xt^{n+m}\in xS$ and $xt^m = (xt^n)(t^{m-n})\in (xt^n)S$.
\end{proof}

A loop is \emph{(right) Bol}, see \cite{Robinson}, if it satisfies the identity
\begin{equation}\label{Eq:RBol}
    x((yz)y) = ((xy)z)y.
\end{equation}
\emph{Left Bol} loops are defined by an identity dual to \eqref{Eq:RBol}.

A loop is Moufang if and only if it is both left and right Bol. Right Bol loops are right power alternative, and hence have the right inverse property. Consequently, by Lemma \ref{Lm:RAlt}, if $x$ is an element of a right Bol loop $Q$ then $Q$ has a left coset decomposition modulo $\langle x\rangle$; in particular, the order of $x$ divides the order of $Q$. This brings us to the other concept we wish to investigate in this section.

We say that a subloop $S$ of $Q$ is \emph{Lagrange-like} if $|S|$ divides $|Q|$. If $S\le Q$ and $Q$ is a group, then $S$ is Lagrange-like. We have just shown that if $S\le Q$, $S$ is cyclic and $Q$ is right power alternative then $S$ is Lagrange-like.

Using ideas similar to those of Glauberman \cite{Gl}, Foguel, Kinyon and Phillips proved in \cite{KiTu} that $S\le Q$ is Lagrange-like if $Q$ is a Bol loop of odd order. It is not known if Lagrange's Theorem holds for Bol loops. We present a novel technique by which it is possible to prove computationally that certain small subloops $S$ are Lagrange-like in any Bol loop $Q$ with $S\le Q$. In some instances we can show even more, namely that any (right) Bol loop $Q$ with $S\le Q$ has a (left) coset partition modulo such a subloop $S$.

For a loop $Q$ and $x\in Q$, let $R_x:Q\to Q$, $y\mapsto yx$ be the \emph{right translation by $x$}. Let $\rmlt(Q) = \langle R_x\,:\,   x\in Q\rangle$ be the permutation group generated by all right translations, the \emph{right multiplication group of $Q$}. For $S\le Q$, let $\rmlt(Q,S) = \langle R_x\,:\, x\in S\rangle$ be the \emph{relative right multiplication group of $Q$ with respect to $S$}. Both $\rmlt(Q)$ and $\rmlt(Q,S)$ act naturally on $Q$ and partition the elements of $Q$ into orbits. The orbit of $x\in Q$ under $\rmlt(Q,S)$ will be denoted by $O_x(Q,S)$. (Of course, the unique orbit of $\rmlt(Q)$ is all of $Q$.) We immediately have:

\begin{lemma} Let $S$ be a subloop of $Q$.
\begin{enumerate}
\item[(i)] If $|O_x(Q,S)|$ is a multiple of $|S|$ for every $x\in Q$, then $S$ is Lagrange-like in $Q$.
\item[(ii)] If $O_x(Q,S)$ can be written as a disjoint union of left cosets of $S$ for every $x\in Q$, then $Q$ has a left coset partition modulo $S$.
\end{enumerate}
\end{lemma}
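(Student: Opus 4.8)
The plan is to derive both statements from a single structural fact: the sets $O_x(Q,S)$, $x\in Q$, being the orbits of the group $\rmlt(Q,S)$ acting on $Q$, form a partition of $Q$. I would fix a transversal of these orbits, so that $Q=\bigsqcup_x O_x(Q,S)$ is a disjoint union over one representative $x$ per orbit. The only auxiliary observation needed is that every left coset of $S$ sits inside a single orbit: for $y\in Q$ and $s\in S$ we have $ys=R_s(y)\in O_y(Q,S)$ since $R_s\in\rmlt(Q,S)$, whence $yS\subseteq O_y(Q,S)$. In particular, left cosets contained in distinct orbits are automatically disjoint.

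For (i) I would simply count. Summing cardinalities over the orbit transversal gives $n=|Q|=\sum_x |O_x(Q,S)|$. By hypothesis each summand $|O_x(Q,S)|$ is a multiple of $m=|S|$, so $n$ is a sum of multiples of $m$ and is therefore itself a multiple of $m$. Thus $|S|$ divides $|Q|$, i.e.\ $S$ is Lagrange-like in $Q$.

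For (ii) I would assemble a global partition out of the local ones. By hypothesis each orbit decomposes as a disjoint union of left cosets, say $O_x(Q,S)=\bigsqcup_{y\in Y_x} yS$ for a suitable representative set $Y_x$. Taking the union of these families over the orbit transversal yields a collection $\mathcal C=\bigcup_x\{yS:y\in Y_x\}$ of left cosets of $S$. By the auxiliary observation each coset in $\mathcal C$ lies inside exactly one orbit, so cosets arising from different orbits are disjoint (the orbits are), while cosets arising from a common orbit are disjoint by hypothesis. Hence the members of $\mathcal C$ are pairwise disjoint, and their union is $\bigsqcup_x O_x(Q,S)=Q$; that is, $\mathcal C$ is a left coset partition of $Q$ modulo $S$.

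Both parts are essentially immediate once the orbit-partition viewpoint is adopted, and the single point requiring care is the compatibility $yS\subseteq O_y(Q,S)$ that lets orbit decompositions be read as coset decompositions. I do not expect any genuine obstacle inside the lemma itself: the real difficulty has been displaced into the hypotheses, since verifying for a concrete Bol loop that the orbits have sizes divisible by $m$, or that they split into left cosets, is precisely the computational task alluded to in the surrounding text. This lemma functions only as the bridge translating such orbit-level information into the Lagrange-like and coset-partition conclusions.
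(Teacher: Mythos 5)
Your proof is correct and follows essentially the same route as the paper, which simply notes that both claims follow immediately from the fact that the orbits of $\rmlt(Q,S)$ partition $Q$. Your added observation that $yS\subseteq O_y(Q,S)$ (since $R_s\in\rmlt(Q,S)$ for $s\in S$) is a worthwhile explicit justification of a step the paper leaves implicit.
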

\begin{proof}
Both claims follow immediately from the fact that the orbits partition $Q$.
\end{proof}

Note that a loop $Q$ has the right inverse property if and only if $R_x^{-1} = R_{x^{-1}}$ for all $x\in Q$. Therefore, in a right inverse property loop $Q$, we have
\begin{displaymath}
    O_x(Q,S) = \{R_{s_k}R_{s_{k-1}}\cdots R_{s_1}(x)\,:\,   k\ge 1,\,s_i\in S\text{ for }1\le i\le k\}.
\end{displaymath}

The difficulty we are facing is that we need to calculate $O_x(Q,S)$ for a fixed subloop $S$ of an unspecified right Bol loop $Q$. We therefore do not know the right translations $R_{s_i}$, but we can use the following greedy algorithm:

\vskip 3mm
\hrule
\vskip 3mm

\textbf{The algorithm}

\textit{Input}: A right Bol loop $S = \{s_1,\dots,s_m\}$ with neutral element $s_1$.

\textit{Output}: If the algorithm terminates, it returns all potential orbits $O_x(Q,S)$ in all Bol loops $Q$, possibly infinite, with $S\le Q$. More concretely, the algorithm returns a list of latin rectangles whose columns are indexed by $S$. Every possible action of $\rmlt(Q,S)$ on the orbit $O_x(Q,S)$ corresponds to one of these latin rectangles in the sense that the column $s_i$ lists the images of $R_{s_i}$ (the value $R_{s_i}(j)$ can be found in row $j$ and column $s_i$).

Note well that we do not assume that $Q$ is finite, nor do we claim that all returned potential orbits actually occur as $O_x(Q,S)$ in some $Q$, but we do claim that all actual orbits $O_x(Q,S)$ are on the list.

\textit{Initialization}: Without loss of generality, label the element $x$ as $1$, and label the distinct elements $1\cdot s_i$ as $i$, for $1\le i\le m$. Since the elements $1$, $\dots$, $m$ are in the orbit $O_x(Q,S)$ and since $s_1$ is the neutral element, we start the algorithm with this partially filled multiplication table
\begin{displaymath}
    \begin{array}{c|cccc}
        &s_1&s_2&\cdots&s_m\\
        \hline
        1&1&2&\cdots&m\\
        2&2& & &\\
        \vdots&\vdots& & &\\
        m&m& & &
    \end{array}
\end{displaymath}
in which the bottom right $(m-1)\times (m-1)$ subsquare is empty.

\textit{Recursive step}:

\textit{(a) Fill forced entries}: Locate the first pair of rows $a$, $b$ and columns $s_i$, $s_j$ such that the entries $(a,s_i)$, $(b,s_j)$ are already filled and $as_i = bs_j$. Then we must have $b = (as_i)s_j^{-1}$ thanks to the right inverse property, and $bs_i = ((as_i)s_j^{-1})s_i = a((s_is_j^{-1})s_i)$ by the right Bol identity \eqref{Eq:RBol}. Since $S$ is given, we can calculate $(s_is_j^{-1})s_i$, say it is equal to some $s_k\in S$. The entries $(b,s_i)$ and $(a,s_k)$ should therefore be the same and one of the following scenarios occurs.

If neither $(b,s_i)$ nor $(a,s_k)$ is known, then move on to the next pair $a$, $b$ of rows and $s_i$, $s_j$ of columns without filling any new entry. If precisely one of $(b,s_i)$, $(a,s_k)$ is known then fill the other entry; if the latin property has just been violated, then we have reached a contradiction and we backtrack, else we repeat step (a). Suppose that both $(b,s_i)$ and $(a,s_k)$ are already filled. If $bs_i\ne as_k$ we have reached a contradiction and we backtrack. If $bs_i=as_k$, we take no action and repeat (a).

If there are no suitable pairs of rows $a$, $b$ and columns $s_i$, $s_j$, move on to step (b).

\textit{(b) Fill greedily the first empty entry}: If the array is already completely filled, add it to the output list and backtrack. Else let $(a,s_i)$ be the first empty entry, and suppose that we have labeled $\ell$ rows so far. The suitable candidates $C$ for the value of $(a,s_i)$ consist of: (i) all symbols from $\{1,\dots,\ell\}$ not contained in the row $a$ or in the column $s_i$ and (ii) the new symbol $\ell+1$. Using depth first search, try each candidate in $C$ as the value for $(a,s_i)$. Should $\ell+1$ be used, also create a new row labeled with $\ell+1$ and fill the entry $(\ell+1,s_1)$ with $\ell+1$. Go to step (a).

\vskip 3mm
\hrule
\vskip 3mm

Let us illustrate the algorithm with two examples.

\begin{example}
Let $S=\{s_1,s_2,s_3\}\cong C_3$, where $s_1$ is the neutral element. We start with the partially filled array
\begin{displaymath}
    \begin{array}{c|ccc}
        &s_1&s_2&s_3\\
        \hline
        1&1&2&3\\
        2&2&&\\
        3&3&&
    \end{array}.
\end{displaymath}
Since $1\cdot s_2 = 2\cdot s_1$, we must have $2\cdot s_2 = ((1\cdot s_2)s_1^{-1})s_2 = 1((s_2s_1^{-1})s_2) = 1s_3 = 3$. This produces no conflict with the latin property, so we can force
\begin{displaymath}
    \begin{array}{c|ccc}
        &s_1&s_2&s_3\\
        \hline
        1&1&2&3\\
        2&2&3&\\
        3&3&&
    \end{array}
\end{displaymath}
and move on. It is an easy calculation to see that we are in fact forced to fill
\begin{displaymath}
    \begin{array}{c|ccc}
        &s_1&s_2&s_3\\
        \hline
        1&1&2&3\\
        2&2&3&1\\
        3&3&1&2
    \end{array}
\end{displaymath}
without ever having to resort to the greedy step (b) of the algorithm.

A similar argument works for every cyclic group $S=C_m$, offering an alternative proof of the special case of Lemma \ref{Lm:RAlt} concerned with right Bol loops $Q$.
\end{example}

\begin{example}
Let $S = \langle \sigma,\,\rho\,:\,   \sigma^2=\rho^3=(\sigma\rho)^2 =1\rangle \cong S_3$. Then one of the potential orbits returned by the algorithm is given in Figure \ref{Fg:S3}. Note that the first new symbol (row) added during the run of the algorithm is located in row $2$ and column $\sigma\rho$. Also note that $18$, the size of the potential orbit, is divisible by $6$, the order of $S_3$. Moreover, the potential orbit can be partitioned as a disjoint union of left cosets of $S_3$, for instance using the rows labeled $1$, $15$ and $17$.
\end{example}

\begin{figure}[ht]
\begin{displaymath}
\begin{array}{r|rrrrrr}
     &\mathrm{id}&\sigma&\sigma\rho&\rho&\rho^2&\sigma\rho^2\\
     \hline
     1&1& 2& 3& 4& 5& 6 \\
     2&2& 1& 7& 8& 9& 10 \\
     3&3& 10& 1& 11& 12& 7 \\
     4&4& 9& 12& 5& 1& 13 \\
     5&5& 8& 11& 1& 4& 14 \\
     6&6& 7& 10& 14& 13& 1 \\
     7&7& 6& 2& 15& 16& 3 \\
     8&8& 5& 16& 9& 2& 17 \\
     9&9& 4& 15& 2& 8& 18 \\
     10&10& 3& 6& 18& 17& 2 \\
     11&11& 17& 5& 12& 3& 16 \\
     12&12& 18& 4& 3& 11& 15 \\
     13&13& 15& 18& 6& 14& 4 \\
     14&14& 16& 17& 13& 6& 5 \\
     15&15& 13& 9& 16& 7& 12 \\
     16&16& 14& 8& 7& 15& 11 \\
     17&17& 11& 14& 10& 18& 8 \\
     18&18& 12& 13& 17& 10& 9
\end{array}
\end{displaymath}
\caption{A possible orbit $O_x(Q,S_3)$ in a right Bol loop $Q$.}\label{Fg:S3}
\end{figure}

We ran the algorithm for all right Bol loops of order less than $16$. The results are summarized in Figure \ref{Fg:Results}, which can be read as follows:

The first column gives the order $m$ of the subloop $S$. The second column gives $S$. Here we use standard notation when $S$ is a group (the group $G$ is the unique group of order $12$ not isomorphic to any of $C_3\times V_4$, $A_4$ or $D_{12}$), $M(S_3,2)$ is the unique nonassociative Moufang loop of order $12$ from \S \ref{sec:intro}, and $RightBol(m,i)$ denotes the $i$th nonassociative right Bol loop of order $m$, as cataloged by the \texttt{LOOPS} package. Figure \ref{Fg:Results} therefore accounts for all noncyclic right Bol loops of order less than $16$. We omit $S\cong C_m$ from the figure since that case is covered by Lemma \ref{Lm:RAlt}.

In the third column, we list sizes of all potential orbits $O_x(Q,S)$ returned by the algorithm, and in parentheses we offer the number of times a given length has been returned (for purposes of independent verification of our data). These multiplicities are also of interest since two potential orbits of the same size may have significantly different internal structures. The last column says ``yes'' when every potential orbit returned by the algorithm can be written as a disjoint union of some left cosets of $S$. When the last column says ``?'', at least one potential orbit could not be so decomposed. Notice, however, that this does not necessarily mean that $Q$ does not have a left coset partition modulo $S$ because we do not know which potential orbits returned by the algorithm occur as actual orbits.

\begin{figure}
\begin{displaymath}
\begin{array}{rrrc}
    \text{order}&\text{subloop }S&\text{orbit lengths (occur $\times$ times)}&\text{partition mod $S$?}\\
    \hline\hline
    4&V_4&4,\,8&\text{yes}\\
    \hline
    6&S_3&6(2),\,18&\text{yes}\\
    \hline
    8&C_2\times C_4&8(2),\,16&\text{yes}\\
     &C_2\times C_2\times C_2&8(30),\,16(1605),\,32(1225),\,64(99),\,128&\text{?}\\
     &D_8&8(2),\,16,\,32&\text{yes}\\
     &Q_8&8(2),\,16&\text{yes}\\
     &RightBol(8,1)&8(2),\,16(7),\,32&\text{?}\\
     &RightBol(8,2)&8(2),\,16(7),\,32&\text{?}\\
     &RightBol(8,3)&8(2),\,16(7),\,32&\text{?}\\
     &RightBol(8,4)&8(2),\,16&\text{yes}\\
     &RightBol(8,5)&8(2),\,16&\text{yes}\\
     &RightBol(8,6)&8(2),\,16&\text{yes}\\
     \hline
     9&C_3\times C_3&9,\,27&\text{yes}\\
     \hline
    10&D_{10}&10(4),\,50&\text{yes}\\
    \hline
    12&C_3\times V_4&12,\,24&\text{yes}\\
      &A_4&12(2),\,24(6),\,48,\,96&\text{yes}\\
      &D_{12}&12(2),\,24(2),\,36,\,72&\text{yes}\\
      &G&12(2),\,36&\text{yes}\\
      &M(S_3,2)&\,12(24),\,24(8),\,36(756),\,72(84)&\text{?}\\
      & & \,108(972),\,216(36),\,324(81),\,648\\
      &RightBol(12,1)&12(6),\,24(2),\,36(9),\,72&\text{yes}\\
      &RightBol(12,2)&12(6),\,24(2),\,36(9),\,72&\text{yes}\\
    \hline
    14&D_{14}&14(6),\,98&\text{yes}\\
    \hline
    15&RightBol(15,1)&15(3),\,75&\text{yes}\\
      &RightBol(15,2)&15(3),\,75&\text{yes}\\
    \hline
\end{array}
\end{displaymath}
\caption{Lengths of potential orbits $O_x(Q,S)$ in a right Bol loop $Q$ for small subloops $S\le Q$.}\label{Fg:Results}
\end{figure}

In summary:

\begin{theorem}\label{Th:RBol}
Let $Q$ be a right Bol loop, possibly infinite.
\begin{enumerate}
\item[(i)] Let $S$ be a right Bol loop of order less than $16$ or a finite cyclic group, and suppose that $S\le Q$. Then the length of every orbit $O_x(Q,S) = \rmlt(Q,S)(x)$ is divisible by $|S|$. In particular, if $Q$ is finite, then $|S|$ divides $|Q|$.
\item[(ii)] Let $S$ be a right Bol loop of order less than $16$ or a finite cyclic group, except for $C_2\times C_2\times C_2$, $RightBol(8,1)$, $RightBol(8,2)$, $RightBol(8,3)$, $M(S_3,2)$. If $S\le Q$ then $Q$ has a left coset partition modulo $S$.
\end{enumerate}
\end{theorem}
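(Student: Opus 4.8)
The plan is to read the theorem off the algorithm's output in Figure~\ref{Fg:Results}, once the algorithm is known to be \emph{sound}, meaning that every orbit which can actually occur is accounted for among its outputs. The cyclic case is disposed of separately and immediately: if $S=\langle t\rangle$ is a finite cyclic group then $Q$, being right Bol, is right power alternative, so Lemma~\ref{Lm:RAlt} gives a left coset decomposition modulo $S$, i.e. $(xs)S=xS$ for all $x\in Q$ and $s\in S$. Since $xS\subseteq O_x(Q,S)$ and, by induction on the length of a string of right translations, every element $(\cdots((xs_1)s_2)\cdots)s_k$ stays inside $xS$ (as $xS$ is closed under each $R_{s}$, $s\in S$), the orbit $O_x(Q,S)$ equals the single coset $xS$. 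Hence $|O_x(Q,S)|=|S|$ and each orbit is itself a left coset, which yields both conclusions at once.

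For $S$ a noncyclic right Bol loop of order less than $16$, I would first prove the key soundness claim: \emph{for every right Bol loop $Q$ (finite or not) with $S\le Q$ and every $x\in Q$, the partial multiplication table of $O_x(Q,S)$ with columns indexed by $S$ occurs among the latin rectangles returned by the algorithm.} One labels the orbit by setting $x=1$ and $1\cdot s_i=i$ and then naming new elements in the order they first appear, exactly as in the Initialization. Every entry forced in step~(a) is a genuine identity in $Q$: the right inverse property gives $b=(as_i)s_j^{-1}$ whenever $as_i=bs_j$, and the right Bol identity~\eqref{Eq:RBol} gives $bs_i=a((s_is_j^{-1})s_i)$, so the true table satisfies all propagated constraints. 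Since step~(b) branches over \emph{every} latin-admissible value of the first empty cell, the true table is a consistent completion and is therefore produced along some branch of the depth-first search.

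Two consequences follow. First, because the algorithm is reported to terminate for each $S$ listed in Figure~\ref{Fg:Results}, there are only finitely many output tables and each is finite; soundness then forces every actual orbit to be finite with length among the third column of the figure, which is what licenses the words ``possibly infinite'' in the hypothesis, since an infinite orbit would correspond to a non-terminating branch. Second, the combinatorial data recorded in the figure transfer verbatim to actual orbits: each row of a table, read as the set of its $m$ entries, is exactly a left coset $yS$, so ``$O_x(Q,S)$ is a disjoint union of left cosets of $S$'' is a property of the table alone, as illustrated by the rows $1,15,17$ in Figure~\ref{Fg:S3}.

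To finish, for part~(i) I would observe that every length listed in Figure~\ref{Fg:Results} is a multiple of the corresponding $|S|$; by soundness the same holds for every actual orbit, and part~(i) of the lemma preceding the algorithm then gives that $|S|$ divides $|Q|$ when $Q$ is finite. For part~(ii) I would note that for each $S$ outside the excluded list the last column reads ``yes'', so every output table, and hence every actual orbit, decomposes into left cosets of $S$; assembling these decompositions across the orbit partition of $Q$ and applying part~(ii) of the same lemma produces the desired left coset partition. The real content, and the step most in need of care, is the soundness claim together with termination: soundness rests on verifying that the forced-entry rules are precisely the right Bol and right inverse consequences and that the branching is exhaustive, while the finiteness and completeness of the list in Figure~\ref{Fg:Results} is the genuinely computational, case-by-case part that cannot be replaced by an a priori argument.
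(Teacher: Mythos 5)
Your proposal is correct and follows essentially the same route as the paper: the paper's own proof of Theorem~\ref{Th:RBol} consists of running the algorithm (the soundness claim --- that every actual orbit $O_x(Q,S)$ appears among the returned potential orbits --- being asserted in the algorithm's description, and the cyclic case being delegated to Lemma~\ref{Lm:RAlt}) and reading the conclusion off Figure~\ref{Fg:Results} via the lemma on orbits. You merely make explicit the soundness and finiteness arguments that the paper leaves implicit, which is a reasonable expansion rather than a different method.
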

\begin{proof}
Independent implementations of the above algorithm were written by two of the authors. Their results agreed and are presented in Figure \ref{Fg:Results}, from which the result follows.
\end{proof}

To our knowledge, the results of Theorem \ref{Th:RBol} are new whenever $S$ is not a cyclic group. With regard to Lagrange's Theorem (that is, $|S|$ divides $|Q|$ in part (i)), the results are new whenever $|S|$ cannot be expressed as the least common multiple of orders of certain elements of $S$, for instance when $S=V_4$ or $S=A_4$.

Note that we do not know if a left coset partition modulo $S$ exists in the exceptional cases of Theorem \ref{Th:RBol}(ii). Also note the rather astonishing lengths of some potential orbits returned by the algorithm, say the one of length $648$ obtained with $S=M(S_3,2)$. We therefore ask:

\begin{problem}\label{Pr:Algorithm}
Let $S$ be a fixed right Bol loop and $Q$ an unspecified right Bol loop with $S\le Q$.
\begin{enumerate}
\item[(i)] Will the algorithm always terminate? (The algorithm could fail to terminate for at least two reasons: some potential orbit is infinite or there are finite potential orbits of arbitrarily large size. We were not able to rule out either of these possibilities.)
\item[(ii)] Are all potential orbits returned by the algorithm also actual orbits?
\item[(iii)] Is there an upper bound in terms of $m=|S|$ on the size of potential orbits returned by the algorithm?
\item[(iv)] Is there an upper bound in terms of $m=|S|$ on the size of the actual orbits $O_x(Q,S)$?
\item[(v)] Is $|O_x(Q,S)|$ always divisible by $|S|$?
\item[(vi)] Does every $O_x(Q,S)$ decompose as a disjoint union of some left cosets of $S$?
\end{enumerate}
\end{problem}

Answering (v) affirmatively would imply Lagrange's Theorem for Bol loops. Answering (vi) affirmatively would imply that a right Bol loop $Q$ has a left coset partition modulo $S$ whenever $S\le Q$, a result at least as strong as Lagrange's Theorem.

\section{Intersections of cosets}

Problem \ref{Pr:Algorithm} is open even for Moufang loops. We restate this special case of Problem \ref{Pr:Algorithm}(vi) here to obtain the following long-standing open problem:

\begin{problem}\label{Pr:MoufDecomp}
Let $Q$ be a Moufang loop and $S\le Q$. Does $Q$ have a left coset partition modulo~$S$?
\end{problem}

With regards to Problem \ref{Pr:MoufDecomp}, it is known that $Q$ need not have a left coset decomposition modulo $S$, i.e., distinct Moufang cosets can have non-trivial intersections despite the fact that the order of each coset must divide $|Q|$. To approach this problem, therefore, we are interested in properties of nonempty coset intersections $xS\cap yS$.

Suppose that $Q$ is a right Bol loop, $S\le Q$, and $x, y\in Q$ are such that $xS\cap yS\ne\emptyset$. Define
\begin{displaymath}
    f_{x,y}:xS\cap yS\to xS\cap yS,\quad xs\mapsto ys.
\end{displaymath}
This indeed defines a mapping, since if $xs\in xS\cap yS$ then $xs=yr$ for some $r\in S$, and we have $y = (xs)r^{-1}$, $ys = ((xs)r^{-1})s = x((sr^{-1})s) \in xS$ by the right Bol identity \eqref{Eq:RBol}. Since $f_{x,y}$ is clearly one-to-one, it is a permutation of $xS\cap yS$.

Recall that in a Bol loop $Q$ the orders of elements divide the order of $Q$ (cf. Lemma \ref{Lm:RAlt}). The following result hence poses a mild restriction on the possible sizes of $xS\cap yS$.

\begin{lemma}\label{Lm:SumOrders}
Let $Q$ be a Moufang loop, $S\le Q$, and $x, y\in Q$ such that $xS\cap yS\ne\emptyset$. Let $xs=yr\in xS\cap yS$. Then $xs$ belongs to a cycle of $f_{x,y}$ whose length is $|sr^{-1}|$, $sr^{-1}\in S$.

In particular, when $S \neq 1$, $|xS \cap yS|$ can be written as a sum of orders of some (possibly repeated) nonidentity elements of $S$ (hence of $Q$).
\end{lemma}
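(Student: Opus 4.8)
The plan is to track how $f_{x,y}$ moves the \emph{double representation} of a point. Every $z\in xS\cap yS$ can be written uniquely as $z=xs=yr$ with $s,r\in S$ (uniqueness because left translations are bijective), and I attach to $z$ the \emph{defect} $c=sr^{-1}\in S$. I will show that $f_{x,y}$ preserves this defect while advancing the left representative by one factor of $c$; concretely, that $f_{x,y}^{\,k}(xs)=x(c^k s)$ for all $k\ge 0$. Granting this, $f_{x,y}^{\,k}(xs)=xs$ holds iff $c^k s=s$, i.e.\ iff $c^k=1$, so the cycle of $f_{x,y}$ through $xs$ has length exactly $|c|=|sr^{-1}|$, which is the first assertion.

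The computation rests on the right inverse property and the right Bol identity \eqref{Eq:RBol}, both available in a Moufang loop. From $xs=yr$ the right inverse property gives $y=(xs)r^{-1}$, whence
\begin{displaymath}
 f_{x,y}(xs)=ys=((xs)r^{-1})s=x((sr^{-1})s)=x(cs),
\end{displaymath}
the middle equality being \eqref{Eq:RBol}. Thus $f_{x,y}(xs)=x(cs)=ys$ is again a point of the intersection, with left representative $cs$ and right representative $s$, so its defect is $(cs)s^{-1}=c$ once more. This lets me iterate: assuming $f_{x,y}^{\,k}(xs)=x(c^k s)=y(c^{k-1}s)$ with defect $c$, the same two steps applied to this point yield $f_{x,y}^{\,k+1}(xs)=x\big(c\cdot c^k s\big)$. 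Here I invoke Moufang's Theorem: since $s$, $r$ generate a group, every product of $s$, $r$ and their inverses --- in particular $c$ and its powers --- may be manipulated associatively, so $c\cdot c^k s=c^{k+1}s$ and the induction closes.

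For the final statement, partition $xS\cap yS$ into the cycles of $f_{x,y}$. By the first part each cycle has length $|sr^{-1}|$ for any of its representatives $z=xs=yr$, and this value is constant along the cycle; summing over all cycles expresses $|xS\cap yS|$ as a sum of such orders. It remains only to check that these are orders of \emph{nonidentity} elements, and this is where the hypothesis enters: if some $sr^{-1}=1$ then $s=r$, and $xs=ys$ forces $x=y$, so in the genuinely two-coset situation $x\neq y$ every defect is nontrivial and (as $S\neq 1$) every cycle length is the order of a nonidentity element of $S$, hence of $Q$. The main obstacle is the inductive identity $f_{x,y}^{\,k}(xs)=x(c^k s)$: one must simultaneously carry the claim that the defect is preserved, and one genuinely needs diassociativity (not merely the right Bol identity) to give meaning to the powers $c^k$ and to perform the associative collapse $c\cdot c^k s=c^{k+1}s$, which is precisely why the ambient hypothesis is Moufang rather than Bol.
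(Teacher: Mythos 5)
Your argument is essentially the paper's own: the invariant ``the defect $c=sr^{-1}$ is preserved while the left representative advances by one factor of $c$'' is exactly the paper's induction $f_{x,y}^{\,k}(xs)=x\cdot t_k$ with $t_k=(sr^{-1})^k s$ and $t_{k+1}=t_k t_{k-1}^{-1}t_k$, carried out with the same two ingredients (the right inverse property to recover $y$, the right Bol identity \eqref{Eq:RBol} to fold the result back into $xS$, and diassociativity to make the powers of $c$ and the collapse $c\cdot c^k s=c^{k+1}s$ meaningful). Your bookkeeping via the preserved defect is a clean repackaging of the paper's two-index induction, and the first assertion is fully proved.

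The one place you fall short of the stated conclusion is the degenerate case $x=y$, which the lemma does not exclude. There $xS\cap yS=xS$ has cardinality $|S|$, every point has defect $1$, and every cycle of $f_{x,y}$ has length $1$, so your cycle decomposition only writes $|S|$ as a sum of $1$'s --- not as a sum of orders of nonidentity elements. You acknowledge this case (``the genuinely two-coset situation'') but do not close it, and the ``in particular'' clause still asserts something there. The paper finishes it with one extra observation: since $S\neq 1$, pick a nonidentity $t\in S$; by Lemma \ref{Lm:RAlt} the order of $t$ divides $|S|$, so $|S|$ is a multiple, hence a sum, of $|t|$. Add that sentence and your proof is complete.
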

\begin{proof}
Let $f=f_{x,y}$ and suppose that $xs=yr$ for some $s$, $r\in S$. We claim that $f^k(xs) = x\cdot t_k$ for every $k\ge 0$, where $t_k = (sr^{-1})^ks\in S$. Note that by diassociativity $t_k t_{k-1}^{-1} t_k$ is well defined and equal to $t_{k+1}$.

The claim is certainly true for $k=0$. We have $f(xs) = ys = (xs\cdot r^{-1}) s = x\cdot (sr^{-1})s$, so the claim is true for $k=1$. Suppose that the claim is true for $k$ and $k-1$. Then $f^{k+1}(xs) = f(f^k(xs)) = f(x\cdot t_k)$ and also $f^{k+1}(xs) = f(f(f^{k-1}(xs))) = f(f(x\cdot t_{k-1})) = f(y\cdot t_{k-1})$. This means that $x\cdot t_k = y\cdot t_{k-1}$, or $y = (xt_k)t_{k-1}^{-1}$. Thus $f^{k+1}(xs) = f(x\cdot t_k) = yt_k = (xt_k)t_{k-1}^{-1}\cdot t_k = x(t_kt_{k-1}^{-1}t_k) = xt_{k+1}$. This completes the proof of the claim.

Since $f^k(xs) = x\cdot (sr^{-1})^ks$ is equal to $xs$ if and only if $(sr^{-1})^k=1$, if follows that $xs$ is in a cycle of $f$ of length $|sr^{-1}|$. We conclude that all cycle lengths of $f$ have sizes corresponding to orders of elements of $S$. If none of these orders is $1$, then the claim follows. Note that if $|sr^{-1}| = 1$, then $s=r$, $x = y$, and $xS = yS$. The claim then holds in this case too, since $S$ must contain a non-identity element and by Lemma \ref{Lm:RAlt} its order will divide $|xS|=|S|$.
\end{proof}


We will now obtain a stronger restriction on the cardinality of $xS\cap yS$ by attempting to shift (by a left translation) the set $xS\cap yS$ into a subloop.

Note first that we cannot necessarily assume without loss of generality (by suitably choosing the representatives of the two cosets) that $x\in xS\cap yS$, as the following example shows:

\begin{example}\label{Ex:Bad2Cosets}
Consider a loop $Q$ with elements $1\le i\le 7$ and subloop $S=\{1,2,3\}$ in which the latin rectangle $L_2$ of Figure \ref{Fg:Cayley} corresponding to rows $4$--$7$ and columns $1$--$3$ is filled as follows:
\begin{displaymath}
    \begin{array}{ccc}
        4&5&6\\
        5&4&7\\
        6&7&4\\
        7&6&5
    \end{array}
\end{displaymath}
Then the representatives of the left cosets $4S$ and $7S$ are uniquely determined (since all cosets $xS$ for $x\not\in S$ are distinct). Moreover, we have $4S\cap 7S = \{5,6\}\ne\emptyset$ but $4$, $7\not\in 4S\cap 7S$.
\end{example}

Dual to the right translations in a loop $Q$, we also define the \emph{left translations} $L_x : Q\to Q$, $y\mapsto xy$. These generate the \emph{left multiplication group} $\lmlt(Q) = \langle L_x\,:\, x\in Q\rangle$. The subgroup of $\lmlt(Q)$ stabilizing the identity element $1\in Q$ is called the \emph{left inner mapping group} $\linn(Q) = (\lmlt(Q))_1$.

\begin{lemma}\label{Lm:Replace}
Let $Q$ be a loop, let $S\leq Q$ and suppose $g\in \lmlt(Q)$. For each $x\in g(S)$, there exists $h_x\in \linn(Q)$ such that $g(S) = xh_x(S)$. In particular, if $1\in g(S)$, then there exists $h\in \linn(Q)$ such that $g(S) = h(S)$.
\end{lemma}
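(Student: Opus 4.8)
The plan is to produce $h_x$ explicitly as an adjustment of $g$ by left translations, and then to verify directly that it lies in $\linn(Q)$ and acts correctly on $S$. Since $x\in g(S)$, the first thing I would do is fix an element $s\in S$ with $g(s)=x$. The natural candidate is then
\[
    h_x = L_x^{-1}\,g\,L_s,
\]
which lies in $\lmlt(Q)$ because $\lmlt(Q)$ is a group and each of $L_x^{-1}$, $g$, $L_s$ belongs to it (recall $L_x^{-1}$ denotes the inverse permutation of $L_x$, not the translation by an inverse element).

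The first verification is that $h_x\in\linn(Q)$, i.e., that $h_x$ fixes the neutral element $1$. This is a short chain of evaluations: $L_s(1)=s$, then $g(s)=x$ by the choice of $s$, and finally $L_x^{-1}(x)=1$ because $L_x(1)=x$. Hence $h_x(1)=1$, so $h_x$ stabilizes $1$ and thus belongs to $\linn(Q)=(\lmlt(Q))_1$. The second verification is the computation of $x h_x(S)$. Writing $x h_x(S)=L_x(h_x(S))$ and unwinding the definition gives $L_x L_x^{-1}\,g\,L_s(S)=g(L_s(S))=g(sS)$. The only point where the loop structure is genuinely used is the identity $sS=S$: since $s\in S$ and $S$ is a subloop, the map $s'\mapsto ss'$ permutes $S$, so $sS=S$. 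Therefore $x h_x(S)=g(S)$, as required.

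The ``in particular'' clause then follows by specializing to $x=1$: if $1\in g(S)$, I would take $x=1$, so that $L_x$ is the identity and $h:=h_1=g\,L_s$ lies in $\linn(Q)$ with $g(S)=1\cdot h(S)=h(S)$. I do not expect a serious obstacle here; essentially all of the content lies in recognizing the right candidate $h_x=L_x^{-1}\,g\,L_s$ (which one is led to by demanding that $L_x h_x=g$ on $S$ together with $h_x(1)=1$) and in the elementary fact that left multiplication by an element of $S$ fixes $S$ setwise.
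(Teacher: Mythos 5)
Your proposal is correct and matches the paper's proof essentially verbatim: the paper also sets $h_x = L_x^{-1} g L_s$ for $s$ with $g(s)=x$, checks $h_x(1)=1$, and computes $xh_x(S) = gL_s(S) = g(S)$ using $sS=S$. Nothing to add.
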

\begin{proof}
If $x\in g(S)$, then there exists $s\in S$ such that $g(s) = x$. Set $h_x = L_x^{-1} g L_s$. Then $h_x(1) = L_x^{-1}g(s) = L_x^{-1}(x) = 1$, so $h_x\in \linn(Q)$. Also, $xh_x(S) = g L_s(S) = g(S)$, as claimed.
\end{proof}

A loop $Q$ is said to be \emph{left automorphic} if every left inner mapping is an automorphism, that is, $\linn(Q) \leq \aut(Q)$. See \cite{BrPa} and \cite{JeKiVo} for an introduction to automorphic loops.

As is the custom in loop theory, we use $\backslash$ and $/$ to denote left and right division, respectively. That is, $x\backslash y= L_x^{-1}(y)$ and $x/y = R_y^{-1}(x)$. By \cite[Theorem I.2.3]{Pf}, a nonempty subset $S$ of a loop $Q$ is a subloop of $Q$ if and only if it is closed under multiplication and the left and right divisions.

\begin{lemma}\label{Lm:LAut}
Let $Q$ be a left automorphic loop, let $S\leq Q$ and let $g_i\in \lmlt(Q)$, $i\in I$. Set $H = \bigcap_{i\in I} g_i(S)$.
Then $x\in H$ if and only if $x\backslash H$ is a subloop of $Q$.
\end{lemma}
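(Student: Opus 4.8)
The plan is to prove the two implications separately, with essentially all of the content living in the forward direction; the converse will follow from nothing more than the definition of a subloop. Throughout I read $x\backslash H$ as the set $\{x\backslash y:y\in H\}=L_x^{-1}(H)$.

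For the forward direction I would assume $x\in H$, so that $x\in g_i(S)$ for every $i\in I$. The first step is to invoke Lemma~\ref{Lm:Replace}: since $x\in g_i(S)$, there is some $h_i\in\linn(Q)$ with $g_i(S)=x\,h_i(S)$, equivalently $L_x^{-1}(g_i(S))=h_i(S)$. Applying the bijection $L_x^{-1}$ to the defining intersection, and using that a bijection commutes with intersections, I obtain
\[
    x\backslash H = L_x^{-1}\Bigl(\bigcap_{i\in I}g_i(S)\Bigr) = \bigcap_{i\in I}L_x^{-1}(g_i(S)) = \bigcap_{i\in I}h_i(S).
\]
The decisive second step is to bring in the left automorphic hypothesis: each $h_i\in\linn(Q)\le\aut(Q)$ is an automorphism of $Q$, so each $h_i(S)$ is the image of a subloop under an automorphism and is therefore itself a subloop (containing $1=h_i(1)$). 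An intersection of subloops is again a subloop, so $x\backslash H$ is a subloop, as required.

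For the converse I would assume $x\backslash H$ is a subloop. Then it contains the neutral element $1$, so there is $y\in H$ with $x\backslash y=1$; but $x\backslash y=1$ forces $y=x\cdot 1=x$, whence $x\in H$. This direction needs only that every subloop contains $1$.

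The only genuinely load-bearing step is the passage from the opaque sets $g_i(S)$ (arbitrary left-multiplicative images of $S$) to the subloops $h_i(S)$: this combines Lemma~\ref{Lm:Replace} with the left automorphic property, and it is precisely what makes $x\backslash H$ a subloop after translating by $x$. The main point to verify carefully is that the replacement $h_i$ from Lemma~\ref{Lm:Replace} can be produced for every $i$ simultaneously and satisfies $L_x^{-1}(g_i(S))=h_i(S)$ uniformly in $i$, which the lemma guarantees exactly because $x\in g_i(S)$ for all $i$. Everything else, including the converse, is essentially bookkeeping.
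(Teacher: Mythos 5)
Your proof is correct and takes essentially the same route as the paper: both reduce via Lemma~\ref{Lm:Replace} to $x\backslash H=\bigcap_{i\in I}h_i(S)$ with $h_i\in\linn(Q)\leq\aut(Q)$, and both handle the converse by noting $1\in x\backslash H$ forces $x\in H$. The only difference is presentational: the paper verifies closure of $\bigcap_i h_i(S)$ under $\cdot$, $\backslash$ and $/$ elementwise, whereas you quote the (equivalent) facts that automorphic images of subloops are subloops and that intersections of subloops containing $1$ are subloops.
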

\begin{proof}
Assume $x\in H$. By Lemma \ref{Lm:Replace}, for each $i\in I$ there exists $h_i\in \linn(Q)$ such that $g_i(S) = xh_i(S)$, and so $H = \bigcap_{i\in I} xh_i(S) = x\bigcap_{i\in I}h_i(S)$. Let $y$, $z\in x\backslash H = \bigcap_{i\in I}h_i(S)$. Then for every $i\in I$ there are $s_i$, $s_i'\in S$ such that $y = h_i(s_i)$, $z=h_i(s_i')$, $yz = h_i(s_i)h_i(s_i') = h_i(s_is_i')\in h_i(S)$, $y\backslash z = h_i(s_i)\backslash h_i(s_i') = h_i(s_i\backslash s_i')\in h_i(S)$, and $y/z = h_i(s_i)/h_i(s_i') = h_i(s_i/s_i')\in h_i(S)$. Thus $yz$, $y\backslash z$, $y/z\in \bigcap_{i\in I}h_i(S) = x\backslash H$, and $x\backslash H\le Q$. For the converse, if $x\backslash H$ is a subloop, then $1\in x\backslash H$ and so $x\in x(x\backslash H) = H$.
\end{proof}

\begin{corollary}\label{Co:LAut}
Let $Q$ be a left automorphic loop, let $S\le Q$, and let $x, y\in Q$. If $x\in xS\cap yS$ then $x\backslash (xS\cap yS)$ is a subloop of $S$.\qed
\end{corollary}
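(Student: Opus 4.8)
The plan is to read off $xS\cap yS$ as a special case of the intersection $H=\bigcap_{i\in I}g_i(S)$ handled by Lemma~\ref{Lm:LAut}, and then to note that the subloop it produces already sits inside $S$.

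First I would set $I=\{1,2\}$ and take $g_1=L_x$, $g_2=L_y$. Since every left translation lies in $\lmlt(Q)$, these are legitimate choices of $g_i\in\lmlt(Q)$, and because $L_x(S)=xS$ and $L_y(S)=yS$ we get $H:=g_1(S)\cap g_2(S)=xS\cap yS$. As $Q$ is left automorphic, Lemma~\ref{Lm:LAut} applies directly and tells us that $x\in H$ if and only if $x\backslash H$ is a subloop of $Q$. The hypothesis of the corollary is precisely $x\in xS\cap yS=H$, so the lemma immediately yields that $x\backslash(xS\cap yS)$ is a subloop of $Q$.

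It then remains only to promote ``subloop of $Q$'' to ``subloop of $S$'', i.e.\ to verify the containment $x\backslash(xS\cap yS)\subseteq S$. This is immediate from $xS\cap yS\subseteq xS$ together with $x\backslash(xS)=L_x^{-1}(L_x(S))=S$; applying the bijection $L_x^{-1}$ to the inclusion $xS\cap yS\subseteq xS$ preserves it and gives $x\backslash(xS\cap yS)\subseteq S$. Combined with the previous paragraph, $x\backslash(xS\cap yS)$ is a subloop of $Q$ contained in $S$, hence a subloop of $S$, as claimed.

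Since all of the genuine content—constructing the candidate subloop from the left-automorphic hypothesis and checking its closure under multiplication and both divisions—has already been carried out inside Lemma~\ref{Lm:LAut}, I do not expect any real obstacle here. The only points requiring care are the purely formal ones of expressing $xS\cap yS$ in the form $\bigcap_{i}g_i(S)$ and of confirming the containment in $S$, both of which are routine; this explains why the statement is marked with an immediate \qed.
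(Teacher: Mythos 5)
Your proposal is correct and is essentially the paper's own (implicit) argument: the corollary is the special case of Lemma~\ref{Lm:LAut} with $g_1=L_x$, $g_2=L_y$, and the containment $x\backslash(xS\cap yS)\subseteq x\backslash(xS)=S$ upgrades ``subloop of $Q$'' to ``subloop of $S$''. Nothing is missing.
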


Note that the class of left automorphic loops includes commutative Moufang loops by \cite[Lemma VII.2.2]{Br}, and \emph{conjugacy closed loops} (loops in which every $L_x^{-1}L_yL_x$ is a left translation and every $R_x^{-1}R_yR_x$ is a right translation) by \cite[Theorem 2.2]{GoRo}.

We observe that the conclusion of Corollary \ref{Co:LAut} cannot be strengthened to $x\backslash(xS\cap yS) = S$, as there is a commutative Moufang loop $Q$ of order $81$ with a subloop $S$ of order $9$ such that $|xS\cap yS|=3$ for a suitable choice of $x$ and $y$.

It is perhaps worth noting in passing (see below) that the asymmetry in the assumption $x\in xS\cap yS$ is illusory in Moufang loops. In Moufang loops, we can write $x^{-1}H$ instead of $x\backslash H$ thanks to the inverse property.

\begin{lemma}
Let $Q$ be a Moufang loop, let $S\leq Q$ and let $x,y\in Q$. Then $x^{-1}(xS\cap yS) = y^{-1}(xS\cap yS)$.
\end{lemma}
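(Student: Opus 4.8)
The statement to prove is that in a Moufang loop $Q$ with $S\le Q$ and $x,y\in Q$, we have $x^{-1}(xS\cap yS) = y^{-1}(xS\cap yS)$. The plan is to exploit the bijection $f_{x,y}$ already constructed before Lemma \ref{Lm:SumOrders}, together with the inverse property of Moufang loops. Recall that $f_{x,y}$ sends $xs\mapsto ys$ for each $xs\in xS\cap yS$, and that it is a permutation of $xS\cap yS$. The key observation is that this permutation interchanges the ``$x$-coordinate'' and ``$y$-coordinate'' in a controlled way. If I can show that $f_{x,y}$ is in fact an \emph{involution} (or more modestly, that it is a bijection whose effect on left-divided sets is to swap the roles of $x$ and $y$), the equality of the two sets follows.

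Here is the route I would take. Let $z\in xS\cap yS$, so $z = xs = yr$ for some $s,r\in S$. Then $x^{-1}z = x^{-1}(xs) = s \in S$ by the inverse property (more precisely by the left inverse property $x^{-1}(xs)=s$, which holds since Moufang loops are inverse property loops), and likewise $y^{-1}z = r \in S$. Thus \emph{both} $x^{-1}(xS\cap yS)$ and $y^{-1}(xS\cap yS)$ are subsets of $S$; the content of the lemma is that they are the \emph{same} subset of $S$. Writing $x^{-1}(xS\cap yS) = \{x^{-1}z : z\in xS\cap yS\}$ and $y^{-1}(xS\cap yS) = \{y^{-1}z : z\in xS\cap yS\}$, I want a bijection of $xS\cap yS$ with itself that carries $x^{-1}z$ to $y^{-1}(\text{something in the set})$ and vice versa. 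The permutation $f_{x,y}$ is exactly the tool: for $z=xs\in xS\cap yS$ we have $f_{x,y}(z)=ys$, so $y^{-1}f_{x,y}(z) = y^{-1}(ys) = s = x^{-1}z$. Since $f_{x,y}$ is a bijection of $xS\cap yS$, as $z$ ranges over $xS\cap yS$ so does $f_{x,y}(z)$, and hence
\begin{displaymath}
    y^{-1}(xS\cap yS) = \{y^{-1}w : w\in xS\cap yS\} = \{y^{-1}f_{x,y}(z): z\in xS\cap yS\} = \{x^{-1}z : z\in xS\cap yS\} = x^{-1}(xS\cap yS),
\end{displaymath}
which is precisely the desired equality.

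The main thing to verify carefully is the interplay of inverses: I must confirm that $y^{-1}(ys)=s$ and $x^{-1}(xs)=s$ genuinely hold, i.e. that the left inverse property is available, and that $f_{x,y}$ really is a well-defined \emph{surjection} of $xS\cap yS$ onto itself (injectivity being immediate). Well-definedness into $xS\cap yS$ was already established in the text via the right Bol identity; surjectivity then follows because $f_{x,y}$ is an injective self-map of a finite set, or, if one wants to avoid finiteness, because the symmetric map $f_{y,x}$ provides a two-sided inverse. I expect no serious obstacle here: the entire argument reduces to the single computation $y^{-1}f_{x,y}(xs) = x^{-1}(xs)$, so the only real point of care is bookkeeping with the inverse-property cancellations, which are routine in any Moufang (hence inverse property) loop. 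The asymmetry noted in the remark preceding the lemma is exactly what $f_{x,y}$ dissolves: it translates a statement about $x$-representatives into one about $y$-representatives without privileging either.
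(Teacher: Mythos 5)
Your proof is correct and is essentially the paper's own argument: the paper shows directly that $s\in x^{-1}(xS\cap yS)$ forces $ys=((xs)t^{-1})s=x((st^{-1})s)\in xS$ via the right Bol identity, which is exactly the computation that makes $f_{x,y}$ well defined, so invoking the permutation $f_{x,y}$ together with the cancellations $x^{-1}(xs)=s$ and $y^{-1}(ys)=s$ is the same proof in different packaging. (The only cosmetic point is the case $xS\cap yS=\emptyset$, where $f_{x,y}$ is not defined but both sides are empty, so the claim holds trivially.)
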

\begin{proof}
Let $D_x = x^{-1}(xS\cap yS)$ and $D_y =y^{-1}(xS\cap yS)$. Let $s\in D_x$. Then there is $t\in S$ such that $xs=yt$. To show that $s\in D_y$, it suffices to prove that $ys\in xS\cap yS$, that is, that $ys\in xS$. From $xs=yt$ we have $y=xs\cdot t^{-1}$ by the right inverse property, so $ys = (xs\cdot t^{-1})s = x(st^{-1}s)\in xS$. The other inclusion $D_y\subseteq D_x$ follows by symmetry.
\end{proof}

Finally, we obtain a restriction on the cardinality of $xS\cap yS$ in left automorphic Moufang loops. Note that in addition to commutative Moufang loops, the class of left automorphic Moufang loops also contains the \emph{extra loops} (defined by $x(y(zx)) = ((xy)z)x$ in \cite{Fe}).

\begin{theorem}
Let $Q$ be a left automorphic Moufang loop, $S\le Q$, and let $x$, $y\in Q$ be such that $xS\cap yS\ne\emptyset$. Then $|xS\cap yS|=|T|$ for some subloop $T$ of $S$. In particular, $|xS\cap yS|$ divides $|S|$.
\end{theorem}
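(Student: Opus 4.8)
The plan is to reduce the general statement to the special case already covered by Corollary \ref{Co:LAut}, where one of the representatives lies inside the intersection. Given $x$, $y\in Q$ with $xS\cap yS\ne\emptyset$, pick any element $w\in xS\cap yS$. Since $w\in xS$, there is $s\in S$ with $w=xs$, and likewise $w=yt$ for some $t\in S$. The key observation is that $wS=xS$ and $wS=yS$: indeed, because $Q$ is Moufang it has a left coset decomposition modulo $S$ (by Lemma \ref{Lm:RAlt} together with the fact that cosets generated by a single element behave well, and more directly because the Moufang property gives $(xs)S=xS$ via Lemma \ref{Lm:Pf}). Thus $xS\cap yS=wS\cap wS=wS$ would collapse the intersection unless $xS\ne yS$; so one must be careful here.

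So first I would check whether $xS=yS$. If they are equal, then $xS\cap yS=xS$, and by Lemma \ref{Lm:RAlt} the coset $xS$ has cardinality $|S|=|T|$ with $T=S$, and trivially $|S|$ divides $|S|$, so the conclusion holds. The interesting case is $xS\ne yS$. Here I would use the left translation $L_w^{-1}$ to move the picture so that the distinguished element of the intersection becomes the identity. Concretely, set $H=xS\cap yS$. Since each of $xS$ and $yS$ is of the form $g(S)$ for a suitable $g\in\lmlt(Q)$ (namely $g=L_x$ and $g=L_y$), and $w\in H$, Corollary \ref{Co:LAut} applies \emph{after} replacing $x$ by $w$: writing $w=xs_0$, we have $wS=xS$ by the coset decomposition, so $xS\cap yS=wS\cap yS$ with $w\in wS\cap yS$. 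By Corollary \ref{Co:LAut}, $w\backslash(wS\cap yS)=w\backslash H$ is a subloop $T$ of $S$.

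It then remains to pass from $w\backslash H$ to a cardinality statement about $H$ itself. Because $L_w$ is a bijection of $Q$, we have $|H|=|w\backslash H|=|T|$, so $|xS\cap yS|=|T|$ for the subloop $T=w\backslash H\le S$. Finally, $T$ is a subloop of the Moufang loop $S$, so by Lagrange's Theorem for the relevant subclass — or, to stay within the machinery of this excerpt, by invoking the coset decomposition of $S$ modulo $T$ (again via Lemmas \ref{Lm:RAlt} and \ref{Lm:Pf}, using that Moufang loops satisfy $(st)T=sT$) — the cosets of $T$ partition $S$ into blocks of size $|T|$, whence $|T|$ divides $|S|$. This yields $|xS\cap yS|=|T|$ dividing $|S|$.

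The main obstacle I anticipate is justifying that one may replace the representative $x$ by an element $w$ lying \emph{inside} the intersection without changing the coset, i.e. establishing the left coset decomposition $wS=xS$ in this Moufang setting; Example \ref{Ex:Bad2Cosets} warns that in a general loop the representatives need not lie in the intersection, so the argument genuinely uses the Moufang (or at least Bol) structure to guarantee $(xs)S=xS$. Once that reduction is secured, everything else is a routine application of Corollary \ref{Co:LAut} and the bijectivity of $L_w$. A secondary subtlety is ensuring that the divisibility $|T|\mid|S|$ is available; since $T$ and $S$ are themselves Moufang loops and $T\le S$, this follows from the same coset-decomposition reasoning applied internally to $S$, so no appeal to the full (still-open) Lagrange theorem for arbitrary Moufang loops is needed here.
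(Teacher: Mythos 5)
There is a genuine gap, and it occurs at exactly the step you flagged as the main obstacle. Your reduction hinges on the claim that in a Moufang loop $(xs)S=xS$ for all $s\in S$, i.e.\ that $Q$ has a left coset decomposition modulo $S$ (via Lemma \ref{Lm:Pf}). This is false for general subloops of Moufang loops: the paper states explicitly, just after Problem \ref{Pr:MoufDecomp}, that ``it is known that $Q$ need not have a left coset decomposition modulo $S$.'' Lemma \ref{Lm:RAlt} only gives the decomposition when $S$ is generated by a single element, so it does not apply here. Your own argument exposes the problem: if $w\in xS\cap yS$ forced $wS=xS$ and $wS=yS$, then $xS=yS$ always, the ``interesting case'' $xS\ne yS$ with nonempty intersection would be vacuous, and the theorem would be trivial — which is precisely why the decomposition cannot hold in general. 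The same false premise reappears at the end when you try to get $|T|\mid |S|$ from a coset decomposition of $S$ modulo $T$; that divisibility instead requires Lagrange's Theorem for Moufang loops (the Gagola--Hall / Grishkov--Zavarnitsine result cited in the introduction), since $T$ need not be cyclic.

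The correct reduction to the situation of Corollary \ref{Co:LAut} is different: rather than changing the coset representative (which does not preserve the coset), one moves \emph{both} cosets by a cardinality-preserving bijection. The Moufang identity $(xy)(zx)=x((yz)x)$ gives $s(aS)s=(sa)(Ss)=(sa)S$ for $s\in S$, so if $xs\in xS\cap yS$ then conjugation by $s^{-1}$ sends $xS\cap yS$ to $(s^{-1}x)S\cap(s^{-1}y)S$ and sends $xs$ to $s^{-1}x$, which now lies in the intersection of the new cosets. Since this map is injective, $|xS\cap yS|=|(s^{-1}x)S\cap(s^{-1}y)S|$, and only then does Corollary \ref{Co:LAut} apply. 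Your instinct to normalize so that the representative lies in the intersection is the right one, but the mechanism you chose for the normalization is unavailable in this setting.
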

\begin{proof}
By one of the Moufang identities \eqref{Eq:Moufang}, $s(aS)s = (sa)(Ss) = (sa)S$ for every $s\in S$, $a\in Q$. Since $xS\cap yS\ne\emptyset$, there is $s\in S$ such that $xs\in xS\cap yS$. Then $s^{-1}x = s^{-1}(xs)s^{-1}\in s^{-1}(xS\cap yS)s^{-1} = s^{-1}(xS)s^{-1} \cap s^{-1}(yS)s^{-1} = (s^{-1}x)S\cap (s^{-1}y)S$. As $|xS\cap yS| = |s^{-1}(xS\cap yS)s^{-1}| = |(s^{-1}x)S\cap (s^{-1}y)S|$, we can assume without loss of generality that $x\in xS\cap yS$. We are done by Corollary \ref{Co:LAut}.
\end{proof}

\begin{problem}
Let $Q$ be a loop, $S\le Q$, and let $x$, $y\in S$ be such that $xS\cap yS\ne\emptyset$.
\begin{enumerate}
\item[(i)] If $Q$ is Moufang, is $|xS\cap yS|=|T|$ for some $T\le Q$, some $T\le S$?
\item[(ii)] If $Q$ is left automorphic, is $|xS\cap yS|=|T|$ for some $T\le Q$, some $T\le S$?
\item[(iii)] If $Q$ is Moufang and $x\in xS\cap yS$, is $x^{-1}(xS\cap yS)$ a subloop of $S$?
\end{enumerate}
\end{problem}

\section*{Acknowledgement}

We thank the anonymous referees for several useful suggestions concerning the presentation of background material, especially on symmetric designs.

\end{document}